\documentclass[a4paper,12pt]{article}

\usepackage{latexsym}
\usepackage{mathrsfs}
\usepackage{amsfonts,amsmath,amssymb}
\usepackage{indentfirst}
\usepackage{subeqnarray}
\usepackage{verbatim}
\usepackage[pdftex]{graphicx}
\usepackage{epstopdf}
\usepackage{booktabs}

\newcommand{\bx}{\mbox{\boldmath{$x$}}}

\newcommand{\bz}{\mbox{\boldmath{$z$}}}

\newcommand{\bzero}{\mbox{\boldmath{$0$}}}

\newcommand{\bI}{\mbox{\boldmath{$I$}}}

\newcommand{\fb}{\mbox{\boldmath{$f$}}}
\newcommand{\bg}{\mbox{\boldmath{$g$}}}

\newcommand{\bS}{\mbox{\boldmath{$S$}}}
\newcommand{\bu}{\mbox{\boldmath{$u$}}}

\newcommand{\bv}{\mbox{\boldmath{$v$}}}
\newcommand{\bV}{\mbox{\boldmath{$V$}}}
\newcommand{\bH}{\mbox{\boldmath{$H$}}}

\newcommand{\beeta}{\mbox{\boldmath{$\eta$}}}
\newcommand{\bzeta}{\mbox{\boldmath{$\zeta$}}}

\newcommand{\bvarepsilon}{\mbox{\boldmath{$\varepsilon$}}}
\newcommand{\bsigma}{\mbox{\boldmath{$\sigma$}}}

\newcommand{\bnu}{\mbox{\boldmath{$\nu$}}}

\newcommand{\bxi}{\mbox{\boldmath{$\xi$}}}

\newcommand{\cA}{\mbox{${\cal A}$}}

\newcommand{\cH}{\mbox{${\cal H}$}}
\newcommand{\cL}{\mbox{${\cal L}$}}
\newcommand{\cQ}{\mbox{${\cal Q}$}}
\newcommand{\cS}{\mbox{${\cal S}$}}

\newcommand{\cV}{\mbox{${\cal V}$}}
\newcommand{\cW}{\mbox{${\cal W}$}}

\newcommand{\weak}{\rightharpoonup}

\newcommand{\bVV}{\mbox{\scriptsize ${\boldsymbol V}^*\times {\boldsymbol V}$}}
\newcommand{\bVVdiv}{\mbox{\scriptsize ${\boldsymbol V}^*_{\rm div}\times {\boldsymbol V}_{\rm div}$}}

\newcommand{\cVVdiv}{\mbox{\scriptsize ${\cal V}^*_{\rm div}\times {\cal V}_{\rm div}$}}

\newtheorem{theorem}{Theorem}[section]
\newtheorem{lemma}[theorem]{Lemma}

\newtheorem{problem}[theorem]{Problem}
\newtheorem{proposition}[theorem]{Proposition}
\newtheorem{remark}[theorem]{Remark}
\numberwithin{equation}{section}
\newenvironment{proof}[1][Proof]{\textbf{#1.} }
{\ \rule{0.75em}{0.75em}\smallskip}

\usepackage[pdftex,unicode,colorlinks,linkcolor=blue]{hyperref}

\begin{document}

\begin{center}
\Large\bf On Well-posedness of a\\
Nonstationary Stokes Hemivariational Inequality
\end{center}

\begin{center}
\large
Weimin Han\footnote{Department of Mathematics, University of Iowa, Iowa City, IA 52242-1410, USA.
E-mail: {\tt weimin-han@uiowa.edu}.  The work of this author was partially supported by Simons Foundation Collaboration Grants, No.\ 850737.}\quad and \quad
Shengda Zeng\footnote{National Center for Applied Mathematics in Chongqing, and School of Mathematical Sciences, Chongqing Normal University, Chongqing 401331, China. E-mail: {\tt zengshengda@163.com}. The work of this author was supported by the National Natural Science Foundation of China under Grant No.\ 12371312, and the Natural Science Foundation of Chongqing under Grant No.\ CSTB2024NSCQ-JQX0033.}
\end{center}

\medskip

\begin{center}
\large \emph{Dedicated to Professor Zuhair Nashed on the occasion of his 90th birthday}
\end{center}

\medskip

\begin{quote}
{\bf Abstract.}  This paper is devoted to the well-posedness analysis of a nonstationary Stokes
hemivariational inequality for an incompressible fluid flow described by the Stokes equations subject to 
a nonsmooth boundary condition of friction type described by the Clarke subdifferential.  In a recent 
paper \cite{HYZ26}, well-posedness of the nonstationary Stokes hemivariational inequality is studied 
for both the velocity and pressure fields.  The solution existence is shown through 
a limiting procedure based on temporally semi-discrete approximations for both the velocity and pressure 
fields.  In this paper, a refined well-posedness analysis is provided on the nonstationary Stokes
hemivariational inequality under more natural assumptions on the problem data.  
The solution existence is first shown for the velocity field through a limiting 
procedure based on temporally semi-discrete approximations of a reduced problem and then the pressure 
field is recovered with the help of an inf-sup property.  In this way, assumptions on the source term and the 
initial velocity needed in \cite{HYZ26} are weakened, and a compatibility condition on 
initial values of the data is dropped.  Moreover, several hemivariational 
inequalities are introduced for the mathematical model and their equivalence is explored. 
\end{quote}

{\bf Keywords.} Nonstationary Stokes equations, hemivariational inequality, well-posedness
\medskip

\section{Introduction}

Since the pioneering work of Panagiotopoulos (\cite{Pa83}), there has been extensive research on modeling, 
analysis, numerical solution and applications of hemivariational inequalities and the more general
variational-hemivariational inequalities. Several research monographs are available for detailed expositions
of mathematical theories and applications of variational-hemivariational inequalities, e.g., just to mention a few
recent ones, \cite{CL2021, Han2024, MOS2013, NP1995, SM2025}.  In most references in the area, 
abstract surjectivity results on pseudomonotone operators are needed to investigate the well-posedness of 
variational-hemivariational inequalities. For an approach without the need of the notion and results of 
pseudomonotone operators, the reader is referred to the book \cite{Han2024} for an accessible coverage 
of the mathematical theory for variational-hemivariational inequalities, based on the techniques developed
in \cite{Han20, Han21}.  Since there are no analytic solution formulas for variational-hemivariational 
inequalities from applications, numerical methods are needed to solve the problems. 
An early comprehensive reference on the finite element method for solving hemivariational 
inequalities is \cite{HMP1999}.  There has been substantial progress on numerical analysis of 
variational-hemivariational inequalities in the recent years, and the reader is referred to
the two survey papers \cite{HS19AN, HFWH25} and the references therein.  See also the
survey paper \cite{Han26} for an elementary level overview of mathematical theory and numerical 
analysis of stationary variational-hemivariational inequalities for beginners of the area.

Nonstationary Stokes variational inequalities are used to describe incompressible fluid flow problems 
involving a non-leak nonsmooth monotone slip condition of friction type. They have been studied in 
a number of papers, e.g., \cite{Dj14,Fu02,LL08}. When a relevant superpotential function in these papers
is no longer assumed to be convex, a hemivariational inequality arises. This paper is devoted to the 
study of a nonstationary Stokes hemivariational inequality, i.e.,
a hemivariational inequality of the time-dependent Stokes equations for an incompressible fluid
flow subject to a non-leak nonsmooth non-monotone slip condition of friction type.  
In the recent paper \cite{HYZ26}, the well-posedness of the nonstationary Stokes hemivariational
inequality is proved under a compatibility condition on initial values of the data. The solution 
existence is shown through a limiting procedure based on temporally semi-discrete approximations
for both the velocity and pressure fields.  In this paper, the solution existence is first shown 
for the velocity field through a limiting procedure based on temporally semi-discrete approximations
of a reduced problem and then the pressure field is recovered with the help of an inf-sup property.
In this way, the compatibility condition on initial values of the data needed in \cite{HYZ26} is dropped. 
We note in passing that in \cite{HYZ26}, numerical analysis is extended to the nonstationary Stokes 
hemivariational inequality.  The Rothe method has been used in studying 
a variety of time-dependent problems, see for example \cite{Rou2013, Ka13, BCKYZ15, ZMN22}.  
Numerical methods have been studied in a number of papers for solving stationary mixed 
hemivariational inequalities in fluid mechanics, e.g., \cite{FCHCD20, LHZ22} for a stationary Stokes 
hemivariational inequality, \cite{HJY23} for a stationary Navier-Stokes hemivariational inequality. 

The organization of this paper is as follows. In Section \ref{sec:pre} we present some definitions
and auxiliary material. Several formulations of the problem are introduced and their equivalence is 
explored in Section \ref{sec:var}; assumptions on the data are also stated in the section.
In Section \ref{sec:reduced}, reduced formulations of the problem for the velocity variable are presented
in which the pressure variable is eliminated.  In Section \ref{sec:exi},  the existence and uniqueness 
of the velocity variable are investigated for the reduced formulations.  In Section \ref{sec:p}, 
the pressure component of the solution for the problem is recovered.  In Section \ref{sec:c}, Lipschitz 
continuous dependence of the velocity solution on the source function and the initial value is shown.
In Section \ref{sec:conc}, we provide a concluding remark on the main contribution of this paper.

\section{Preliminaries}\label{sec:pre}

Let $V$ be a Banach space with the norm $\|\cdot\|_V$.  Denote by $V^*$ its topological dual, and by 
$\langle\cdot,\cdot\rangle_{V^*\times V}$ the duality pairing between $V^*$ and $V$.  Weak convergence
will be indicated by the symbol $\weak$.  The symbol $2^{V^*}$ represents the set of all subsets of $V^*$. 

We first recall the notions of the generalized directional derivative and the generalized subdifferential
of a locally Lipschitz function.  These notions are needed in expressing hemivariational inequalities.
Let $\Psi\colon V\to \mathbb{R}$ be a locally Lipschitz function.  Then the generalized directional derivative 
(in the sense of Clarke) of $\Psi$ at $u\in V$ in the direction $v\in V$ is defined by
\[\Psi^0(u;v)=\limsup_{w\rightarrow u,\lambda\rightarrow0}\frac{\Psi(w+\lambda v)-\Psi(w)}{\lambda},\]
and the generalized subdifferential (in the sense of Clarke) of $\Psi$ at $u$ is defined by
\[ \partial\Psi(u)=\{\xi\in V^*\mid \Psi^0(u;v)\ge\langle\xi,v\rangle_{V^*\times V}\ \forall\,v\in V\}.\]
For properties and detailed discussions of the generalized directional derivative and 
the generalized subdifferential of a locally Lipschitz function, the reader is referred to 
\cite{Cl1983, CLSW1998} or \cite[Section 3.2]{MOS2013}.  

Given a finite time interval $I=(0,T)$, for $1\leq p\leq\infty$, we will use the Bochner 
space $L^p(I;V)$.  For $1\leq q<\infty$, we denote by $BV^q(I;V)$ the space of functions of bounded  
total variation on $I$ of exponent $q$.  This space consists of functions $v\colon\overline{I}\to V$ such that 
\[ \|v\|_{BV^q(I;V)}^q=\sup_{\pi\in\mathcal{P}}\sum_{i=1}^{n(\pi)}\|v(a_i)-v(a_{i-1})\|_V^q<\infty,\]
where $\mathcal{P}$ is the set of all finite partitions $\pi$ of $\overline{I}=[0,T]$:
$0=a_0<a_1<\cdots<a_{n(\pi)}=T$, $n(\pi)$ being a positive integer.

Given Banach spaces $V_1$ and $V_2$ such that $V_1\subset V_2$, define
\[ M^{p,q}(I;V_1,V_2)=L^p(I;V_1)\cap BV^q(I;V_2)\]
which is a Banach space for $1\leq p,q<\infty$ with the norm given by
$\|\cdot\|_{L^p(I;V_1)}+\|\cdot\|_{BV^q(I;V_2)}$.

The following important result is proved in \cite{Ka13}.

\begin{theorem}\label{th:lb}
Let $1\leq p,q<\infty$. Let $V_1\subset V_2\subset V_3$ be real Banach spaces such that $V_1$
is reflexive, the embedding $V_1\subset V_2$ is compact and the embedding $V_2\subset V_3$ is continuous.
Then a bounded subset of $M^{p,q}(I;V_1,V_3)$ is relatively compact in $L^p(I;V_2)$.
\end{theorem}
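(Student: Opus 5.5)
To prove Theorem~\ref{th:lb}, I would follow the classical Aubin--Lions--Simon route. The idea is to combine an Ehrling-type interpolation inequality with a compactness criterion in $L^p(I;V_3)$ that uses time translations. The $BV^q$ bound plays the role usually taken by a bound on the time derivative.

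\textbf{Step 1 (Ehrling's lemma).} Because $V_1\subset V_2$ is compact and $V_2\subset V_3$ is continuous and injective, for every $\eta>0$ there is a constant $C_\eta$ with
\[ \|v\|_{V_2}\le \eta\|v\|_{V_1}+C_\eta\|v\|_{V_3}\quad\forall\,v\in V_1. \]
This is proved by contradiction: take a sequence with $\|v_n\|_{V_1}=1$ violating the inequality, extract a subsequence converging in $V_2$, and use injectivity into $V_3$ to reach a contradiction. Integrating the $p$-th power, it follows that a bounded set $F\subset L^p(I;V_1)$ is relatively compact in $L^p(I;V_2)$ as soon as it is relatively compact in $L^p(I;V_3)$. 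Indeed, a sequence that is Cauchy in $L^p(I;V_3)$ and bounded in $L^p(I;V_1)$ is Cauchy in $L^p(I;V_2)$, since $\eta$ can be taken arbitrarily small.

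\textbf{Step 2 (time-translation estimate).} For $v\in BV^q(I;V_3)$ and $0<h<T$, I want to show
\[ \int_0^{T-h}\|v(t+h)-v(t)\|_{V_3}^q\,dt\le h\,\|v\|_{BV^q(I;V_3)}^q. \]
Write $t=s+kh$ with $s\in[0,h)$. Then
\[ \int_0^{T-h}\|v(t+h)-v(t)\|^q\,dt=\int_0^h\sum_k\|v(s+(k+1)h)-v(s+kh)\|^q\,ds. \]
For each fixed $s$, the points $s+kh$ form part of a partition of $[0,T]$, so the inner sum is bounded by the $BV^q$ seminorm. Moreover, $v$ is bounded in $V_3$: $\|v(t)\|\le\|v(0)\|+\|v\|_{BV^q}$, and $\|v(0)\|$ is controlled by the $L^p$ norm together with the $BV$ bound. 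To pass from exponent $q$ to exponent $p$, I use this uniform bound when $p>q$, and Hölder's inequality when $p<q$. The result is $\int_0^{T-h}\|v(t+h)-v(t)\|_{V_3}^p\,dt\to0$ uniformly over the bounded set as $h\to0$.

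\textbf{Step 3 (compactness in $L^p(I;V_3)$).} I would apply Simon's criterion. A set $F\subset L^p(I;V_3)$ is relatively compact if two conditions hold:
\begin{enumerate}
\item for every $0<t_1<t_2<T$, the set $\{\int_{t_1}^{t_2}v\,dt: v\in F\}$ is relatively compact in $V_3$;
\item the translations satisfy $\|v(\cdot+h)-v\|_{L^p(0,T-h;V_3)}\to0$ uniformly on $F$.
\end{enumerate}
Condition (i) holds because these integrals are bounded in $V_1$, hence relatively compact in $V_2$ and so in $V_3$; here Simon's theorem does not even require reflexivity of $V_1$. Condition (ii) is Step 2. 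Combining this with Step 1 gives the theorem.

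\textbf{Main obstacle.} The delicate point is that elements of $BV^q$ are defined pointwise, not just almost everywhere, so Step 2 needs care. One must check that $v$ is measurable as a $V_3$-valued function and that the pointwise evaluations match the $L^p$ representative. If this gets awkward, an alternative is to approximate by piecewise-constant (Rothe-type) step functions on a uniform mesh. For these, the translation estimate is direct, and the discrepancy in $L^p(I;V_3)$ is controlled by the $BV^q$ bound, giving uniform approximation of $F$ by a relatively compact family.
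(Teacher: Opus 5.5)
The paper does not prove Theorem~\ref{th:lb}; it quotes the result from \cite{Ka13}. Your argument is a correct, self-contained alternative.

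Your Steps~1 and~3 together reprove Simon's version of the Aubin--Lions lemma (Theorem~5 in J.~Simon, \emph{Compact sets in the space $L^p(0,T;B)$}, Ann.\ Mat.\ Pura Appl.\ 146 (1987)). That result says: a family bounded in $L^p(I;V_1)$ with $\|v(\cdot+h)-v\|_{L^p(0,T-h;V_3)}\to0$ uniformly as $h\to0$ is relatively compact in $L^p(I;V_2)$. So the only ingredient specific to $M^{p,q}$ is your Step~2. That step is the inequality $\int_0^{T-h}\|v(t+h)-v(t)\|_{V_3}^q\,dt\le h\,\|v\|_{BV^q(I;V_3)}^q$, plus the bound $\sup_{t}\|v(t)\|_{V_3}\le C\bigl(\|v\|_{L^p(I;V_1)}+\|v\|_{BV^q(I;V_3)}\bigr)$ for $p>q$ (H\"older for $p<q$) to pass from exponent $q$ to $p$. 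Both parts are argued correctly.

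Your route gives transparency and a slightly stronger result: reflexivity of $V_1$ is never used and can be dropped. The citation gives brevity. In the paper's only application ($p=q=2$, $V_1$ a Hilbert space) the extra hypothesis is harmless anyway.

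The \emph{main obstacle} you flag is not a real one. By the meaning of the intersection $L^p(I;V_1)\cap BV^q(I;V_3)$, the everywhere-defined $v$ agrees a.e.\ with a strongly measurable function. Hence $t\mapsto\|v(t+h)-v(t)\|_{V_3}$ is Lebesgue measurable. Alternatively, finite $q$-variation makes $v$ regulated, hence a uniform limit of step functions. The fallback via step-function approximation is therefore unnecessary.
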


We will apply the modified Cauchy-Schwarz inequality on several occasions in Section \ref{sec:exi}, with a proper choice of $\epsilon>0$,
\begin{equation}
 a\,b\le \epsilon\,a^2+ c_\epsilon b^2\quad\forall\,a,b\in\mathbb{R},\, \epsilon>0, \ c_\epsilon=1/(4\,\epsilon).
\label{mCS}
\end{equation}

The following version of Aubin-Cellina convergence theorem (\cite[p.\ 60]{AC1984}) will be used.

\begin{theorem}\label{th:sv}
Let $F: X\rightarrow 2^Y$ be a set-valued function between two Banach spaces $X$ and $Y$.  Assume $F$ 
is upper semicontinuous from $X$ into $Y_w$ and the values of $F$ are nonempty, closed and convex subsets 
of $Y$. Let $x_n\colon (0,T)\rightarrow X$, $y_n\colon (0,T)\rightarrow Y$, $n\in \mathbb{N}$, be measurable 
functions such that $x_n$ converges almost everywhere on $(0,T)$ to a function $x\colon (0,T)\rightarrow X$ 
and $y_n$ converges weakly in $L^1(0,T;Y)$ to $y\colon (0,T)\rightarrow Y$. If $y_n(t)\in F(x_n(t))$ for all 
$n\in \mathbb{N}$ and almost all $t\in(0,T)$, then $y(t)\in F(x(t))$ for a.e.\ $t\in(0,T)$.
\end{theorem}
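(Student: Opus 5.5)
The statement to be proved is Theorem~\ref{th:sv}, the Aubin--Cellina convergence theorem. My plan combines Mazur's lemma with the graph-closedness that comes from upper semicontinuity.

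\emph{Step 1: convex combinations.} Since $y_n\weak y$ in $L^1(0,T;Y)$, for each $k$ the element $y$ lies in the weak closure of $\mathrm{conv}\{y_n: n\ge k\}$. By Mazur's lemma it is therefore also in the strong closure. So there are convex combinations
\[
z_k=\sum_{n=k}^{N_k}\lambda^k_n y_n
\]
with $z_k\to y$ strongly in $L^1(0,T;Y)$. Passing to a subsequence, $z_k(t)\to y(t)$ in $Y$ for a.e.\ $t$. We fix a full-measure set $E\subset(0,T)$ on which all of the following hold: $x_n(t)\to x(t)$, $y_n(t)\in F(x_n(t))$ for every $n$, and $z_k(t)\to y(t)$.

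\emph{Step 2: pointwise inclusion via separation.} Fix $t\in E$ and suppose $y(t)\notin F(x(t))$. The set $F(x(t))$ is nonempty, closed and convex, so Hahn--Banach separation gives $y^*\in Y^*$ and $\alpha\in\mathbb{R}$ with
\[
\langle y^*,y(t)\rangle>\alpha>\sup_{w\in F(x(t))}\langle y^*,w\rangle .
\]
The set $U=\{w\in Y:\langle y^*,w\rangle<\alpha\}$ is weakly open and contains $F(x(t))$. By upper semicontinuity of $F$ from $X$ into $Y_w$, there is a neighborhood $\mathcal N$ of $x(t)$ such that $F(\xi)\subset U$ for all $\xi\in\mathcal N$. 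Since $x_n(t)\to x(t)$, we have $x_n(t)\in\mathcal N$ for $n\ge n_0$, and hence $\langle y^*,y_n(t)\rangle<\alpha$ for those $n$. For $k\ge n_0$ each $z_k(t)$ is a convex combination of such $y_n(t)$, so $\langle y^*,z_k(t)\rangle<\alpha$. Letting $k\to\infty$ gives $\langle y^*,y(t)\rangle\le\alpha$, which is a contradiction. Therefore $y(t)\in F(x(t))$ for every $t\in E$, that is, for a.e.\ $t$.

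\emph{Main obstacle.} The delicate point is Step 1. The null set must be chosen independently of the separating functional, which is why I work with the single sequence $z_k$ and one common exceptional set. Measurability is needed to make sense of $L^1$ convergence and to extract the a.e.\ convergent subsequence; both are given by the hypotheses.
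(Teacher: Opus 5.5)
Your argument is correct. The paper gives no proof of this theorem and simply quotes it from \cite[p.\ 60]{AC1984}; your route is essentially the standard proof given there: Mazur's lemma yields tail convex combinations $z_k\in\mathrm{conv}\{y_n: n\ge k\}$ converging strongly in $L^1(0,T;Y)$, a subsequence converges a.e.\ on one common full-measure set, and Hahn--Banach separation combined with upper semicontinuity of $F$ into $Y_w$ finishes (Aubin and Cellina phrase this last step as upper semicontinuity of the support function $\xi\mapsto\sup_{w\in F(\xi)}\langle y^*,w\rangle$ and allow approximate inclusions near the graph, but the mechanism is the same).
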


\section{Mathematical model}\label{sec:var}

Let $d=2$ or 3 be the dimension of the spatial domain for the problem under consideration.
Denote the Euclidean norm in $\mathbb{R}^d$ by $|\cdot|$.  Denote by $\mathbb{S}^d$ the space of
symmetric tensors on $\mathbb{R}^d$, or equivalently, the space of symmetric matrices of order $d$. 

Let $\Omega\subset\mathbb{R}^d$ be a Lipschitz domain.  Its boundary
$\Gamma$ is split into two non-trivial non-overlapping parts: $\Gamma=\Gamma_D\cup\Gamma_S$ 
with $\Gamma_D$ relatively closed and $\Gamma_S$ relatively open. 
Let $[0,T]$ be the time interval for a given $T>0$.  Since the boundary is Lipschitz continuous, 
the unit outward normal vector $\bnu=(\nu_1,\cdots,\nu_d)^T$ exists a.e.\ on $\Gamma$.  For a
vector $\bv$, we denote by $v_\nu=\bv\cdot \bnu$ its normal component and 
$\bv_\tau=\bv-v_\nu\bnu$ its tangential component on $\Gamma$.

We consider the motion of an incompressible fluid flow in $\Omega$, modeled by the Stokes equations.
The unknown variables are the velocity field $\bu$ and the pressure field $p$ of the fluid.
Denote by $\mu>0$ the viscosity of the fluid, by $\fb$ the density of the volume force 
applied to the fluid, and by $\bu_0\colon \Omega\to\mathbb{R}^d$ be the initial velocity.  The 
stress tensor is $\bsigma=2\,\mu\,\bvarepsilon(\bu)-p\,\bI$, where $\bvarepsilon(\bu)=(\nabla \bu+(\nabla\bu)^T)/2$ 
and $\bI$ is the identity tensor.  The normal and tangential components of the stress on the boundary 
are $\sigma_\nu=(\bsigma\bnu)\cdot\bnu$ and $\bsigma_\tau=\bsigma\bnu-\sigma_\nu\bnu$.  
We impose the homogeneous Dirichlet boundary condition on $\Gamma_D$ and a non-smooth, possibly
non-monotone, friction condition with a potential function $\psi$ on $\Gamma_S$.  The potential 
function $\psi$ is assumed to be locally Lipschitz continuous.  
The classical pointwise formulation of the problem is the following:
\begin{align}
\bu^\prime-2\mu\,{\rm div}\bvarepsilon(\bu)+\nabla p & =\fb\quad{\rm in}\ (0,T)\times\Omega, \label{S01}\\
{\rm div}\,\bu & =0\quad{\rm in}\ (0,T)\times\Omega,  \label{S02}\\
\bu & = \bzero\quad{\rm on}\ (0,T)\times\Gamma_D,  \label{S03}\\
u_\nu=0,\ -\bsigma_\tau& \in \partial\psi(\bu_\tau)\quad{\rm on}\ (0,T)\times\Gamma_S,  \label{S04}\\
\bu|_{t=0} & = \bu_0\quad{\rm in}\ \Omega. \label{S05}
\end{align}
In \eqref{S04}, $\partial\psi$ denotes the generalized subdifferential of $\psi$ in the sense of Clarke. 

The problem \eqref{S01}--\eqref{S05} will be studied in weak forms. For this purpose, 
we Introduce the function spaces
\begin{align}
\bV & =\left\{\bv\in H^1(\Omega)^d \mid \bv=\bzero\ {\rm on}\ \Gamma_D,\, v_\nu=0\ {\rm on}\ \Gamma_S\right\},
\label{bV}\\
Q & = L^2_0(\Omega) =\left\{q\in L^2(\Omega)\mid (q,1)_{L^2(\Omega)}=0\right\},
\label{Q}\\
\bH & = L^2(\Omega)^d, \label{bH}
\end{align}
where $(\cdot,\cdot)_{L^2(\Omega)}$ is the canonical inner product in $L^2(\Omega)$, and
\begin{align}
\cV & =L^2(0,T;\bV), \label{cV}\\
\cW & =\left\{\bv\in\cV\mid \bv^\prime \in \cV^*\right\}, \label{cW}\\
\cQ & = L^2(0,T;Q), \label{cQ}\\
\cH & = L^2(0,T;H). \label{cH}
\end{align}
Since $|\Gamma_D|>0$, Korn's inequality holds: there exists a constant
$c>0$, depending only on $\Omega$ and $\Gamma_D$, such that
\[ \|\bv\|_{H^1(\Omega)^d}\le c\,\|\bvarepsilon(\bv)\|_{L^2(\Omega)^{d\times d}}\quad\forall\,\bv\in \bV.\]
See e.g.\ \cite[p.\ 79]{NH1981} for a proof of this inequality. 
Consequently, $\bV$ is a Hilbert space with the inner product
\[ (\bu,\bv)_{\boldsymbol V}:=\int_\Omega \bvarepsilon(\bu):\bvarepsilon(\bv)\,dx\quad\forall\,\bu,\bv\in\bV,\]
and the induced norm $\|\bv\|_{\boldsymbol V}=(\bv,\bv)_{\boldsymbol V}^{1/2}$ is equivalent to the standard 
$H^1(\Omega)^d$-norm over $\bV$.  The spaces $Q$, $\bH$, $\cV$, $\cW$, $\cQ$ and $\cH$ are all 
Hilbert spaces with their standard inner products.  We will also need the following subspace of $\bV$:
\begin{equation}
\bV_0:=H^1_0(\Omega)^d.
\label{SpV0}
\end{equation}
We have the trace inequality
\begin{equation}
\|\bv_{\tau}\|_{L^2(\Gamma_S)^d}\le \lambda_\tau^{-1/2}\|\bv\|_{\boldsymbol V}
\quad\forall\,\bv\in \bV,   \label{trace}
\end{equation}
where $\lambda_\tau>0$ is the smallest eigenvalue of the eigenvalue problem
\[ \bu\in\bV,\quad (\bu,\bv)_{\boldsymbol V}=\lambda\,(\bu_{\tau},\bv_{\tau})_{L^2(\Gamma_S)^d}
\quad\forall\,\bv\in \bV. \]

Define the bilinear forms
\begin{align}
a(\bu,\bv)&=2\,\mu\int_\Omega \bvarepsilon(\bu):\bvarepsilon(\bv)\,dx\quad\forall\,\bu,\bv\in\bV, \label{a}\\[1mm]
b(\bv,q) &= \int_\Omega q\,{\rm div}\bv\,dx\quad\forall\,\bv\in\bV,\,q\in Q. \label{b}
\end{align}
Obviously, the bilinear form $a(\cdot,\cdot)\colon \bV\times\bV\to\mathbb{R}$ is symmetric, continuous, and 
$\bV$-elliptic:
\begin{align*}
|a(\bu,\bv)| &\le 2\,\mu\,\|\bu\|_{\boldsymbol V} \|\bv\|_{\boldsymbol V} \quad\forall\,\bu,\bv\in\bV, \\
a(\bv,\bv) &=2\,\mu\,\|\bv\|_{\boldsymbol V}^2 \quad\forall\,\bv\in\bV.
\end{align*}
The bilinear form $b(\cdot,\cdot)\colon \bV\times\ Q\to\mathbb{R}$ is continuous: for a constant $c_b>0$,
\[ |b(\bv,q)| \le c_b \|\bv\|_{\boldsymbol V} \|q\|_Q \quad\forall\,\bv\in\bV,\,q\in Q. \]
Moreover, we have the inf-sup property (\cite{GR1986, Jo2016, Te1979}): for a positive constant $\alpha_b>0$,
\begin{equation}
\sup_{{\boldsymbol 0}\not={\boldsymbol v}\in {\boldsymbol V}_0} \frac{b(\bv,q)}{\|\bv\|_{\boldsymbol V}}
\ge \alpha_b \|q\|_Q\quad\forall\,q\in Q.
\label{inf-sup}
\end{equation}
Define operators $A\colon \bV\to\bV^*$ and $B\colon \bV\to Q^*$ by
\begin{align*}
\langle A\bu,\bv\rangle_{\bVV} & = a(\bu,\bv)\quad\forall\,\bu,\bv\in\bV,  \\
\langle B\bv,q\rangle_{Q^*\times Q} & = b(\bv,q)\quad\forall\,\bv\in\bV,\,q\in Q.
\end{align*}
Then $A\in\cL(\bV;\bV^*)$ is self-adjoint and 
\[ \langle A\bv,\bv\rangle_{\bVV}=2\,\mu\,\|\bv\|_{\boldsymbol V}^2\quad\forall\,\bv\in \bV. \]

Concerning the data, we assume
\begin{equation}
\fb\in \mathcal{V}^*,\quad \bu_0\in \bH,
\label{assump1}
\end{equation}
and\\
\underline{$H(\psi)$}: $\psi\colon\Gamma_S\times(0,T)\times\mathbb{R}^d\to\mathbb{R}$ is such that

(i) $\psi(\cdot,\cdot,\bxi)$ is measurable on $\Gamma_S\times(0,T)$ for all $\bxi\in \mathbb{R}^d$ and there exists
$\bxi_0\in L^2(\Gamma_S)^d$ such that $\psi(\cdot,t,\bxi_0(\cdot))\in L^1(\Gamma_S)$ for a.e.\ $t\in (0,T)$;

(ii) $\psi(\bx,t,\cdot)$ is locally Lipschitz on $\mathbb{R}^d$ for a.e.\ $(\bx,t)\in \Gamma_S\times (0,T)$;

(iii) $|\beeta|\leq c_0(1+|\bxi|)$ for all $\bxi\in \mathbb{R}^d$, $\beeta\in \partial \psi(\bx,t,\bxi)$,
a.e.\ $(\bx,t)\in \Gamma_S\times (0,T)$ with $c_0>0$;

(iv) $(\beeta_1-\beeta_2)(\bxi_1-\bxi_2)\geq-\alpha_\psi|\bxi_1-\bxi_2|^2$ for all $\beeta_i\in\partial\psi(\bx,t,\bxi_i)$,
$\bxi_i\in \mathbb{R}^d$, $i=1,2$, a.e.\ $(\bx,t)\in \Gamma_S\times (0,T)$ with a constant $\alpha_\psi>0$.

\smallskip
For simplicity in writing, we will use the notation $I_{\Gamma_S}(z)$ for the integral of a function $z$ over $\Gamma_S$.

The following result holds (\cite[Lemma 3.1]{HYZ26}).

\begin{lemma}\label{lem:psi}
Assume $H(\psi)$.  Then,
\[ I_{\Gamma_S}(\psi^0(\bv_{1,\tau};\bv_{2,\tau}-\bv_{1,\tau})
+\psi^0(\bv_{2,\tau};\bv_{1,\tau}-\bv_{2,\tau}))\le 
\alpha_\psi \lambda_\tau^{-1}\|\bv_1-\bv_2\|_{\boldsymbol V}^2\quad\forall\,\bv_1,\bv_2\in\bV.\]
\end{lemma}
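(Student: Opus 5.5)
The plan is a pointwise argument on $\Gamma_S$ that is then integrated. Fix $\bv_1,\bv_2\in\bV$, and for a.e.\ $(\bx,t)\in\Gamma_S\times(0,T)$ write $\bxi_i=\bv_{i,\tau}(\bx)$, $i=1,2$. The key step is the pointwise inequality
\[ \psi^0(\bx,t,\bxi_1;\bxi_2-\bxi_1)+\psi^0(\bx,t,\bxi_2;\bxi_1-\bxi_2)\le\alpha_\psi|\bxi_1-\bxi_2|^2 . \]
Once this holds, integrating over $\Gamma_S$ and applying the trace inequality \eqref{trace} to $\bv_1-\bv_2$ gives
\[ \alpha_\psi\|\bv_{1,\tau}-\bv_{2,\tau}\|_{L^2(\Gamma_S)^d}^2\le\alpha_\psi\lambda_\tau^{-1}\|\bv_1-\bv_2\|_{\boldsymbol V}^2, \]
which is the claim.

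To obtain the pointwise inequality, use the standard fact from \cite{Cl1983} that, since $\psi(\bx,t,\cdot)$ is locally Lipschitz on $\mathbb{R}^d$, the set $\partial\psi(\bx,t,\bxi)$ is nonempty, convex and compact, and $\psi^0(\bx,t,\bxi;\bzeta)=\max\{\beeta\cdot\bzeta\mid \beeta\in\partial\psi(\bx,t,\bxi)\}$. Choose maximizers $\beeta_1\in\partial\psi(\bx,t,\bxi_1)$ for the direction $\bxi_2-\bxi_1$ and $\beeta_2\in\partial\psi(\bx,t,\bxi_2)$ for the direction $\bxi_1-\bxi_2$. Then
\[ \psi^0(\bxi_1;\bxi_2-\bxi_1)+\psi^0(\bxi_2;\bxi_1-\bxi_2)=\beeta_1\cdot(\bxi_2-\bxi_1)+\beeta_2\cdot(\bxi_1-\bxi_2)=-(\beeta_1-\beeta_2)\cdot(\bxi_1-\bxi_2), \]
and $H(\psi)$(iv) bounds the right-hand side by $\alpha_\psi|\bxi_1-\bxi_2|^2$.

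The main obstacle is making sense of the integral $I_{\Gamma_S}(\cdot)$, i.e.\ checking that the two boundary integrands are measurable and integrable. For integrability, $H(\psi)$(iii) and the max formula give
\[ |\psi^0(\bx,t,\bxi;\bzeta)|\le c_0(1+|\bxi|)|\bzeta|, \]
which lies in $L^1(\Gamma_S)$ because $\bv_{i,\tau}\in L^2(\Gamma_S)^d$. For measurability, the first step would be to represent $\psi^0$ as a limsup over rational sequences: continuity of $\psi(\bx,t,\cdot)$ lets $w$ and $\lambda$ run over countable dense sets. Combined with the measurability in $H(\psi)$(i), this makes $\psi^0(\cdot,t,\bv_{1,\tau}(\cdot);\bv_{2,\tau}(\cdot)-\bv_{1,\tau}(\cdot))$ measurable, as in \cite[Section 3.2]{MOS2013}. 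Alternatively, one can simply interpret $I_{\Gamma_S}$ as an upper integral, since only an upper bound is being proved.
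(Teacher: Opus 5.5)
Your argument is correct. The pointwise estimate $\psi^0(\bxi_1;\bxi_2-\bxi_1)+\psi^0(\bxi_2;\bxi_1-\bxi_2)\le\alpha_\psi|\bxi_1-\bxi_2|^2$ follows from the max formula and $H(\psi)$(iv). Integrating it over $\Gamma_S$ and applying the trace inequality \eqref{trace} to $\bv_1-\bv_2$ gives the claim, with your growth bound from $H(\psi)$(iii) settling integrability.

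The paper itself gives no proof of this lemma and simply quotes it from \cite{HYZ26}, so there is no in-paper argument to compare against. What you give is the standard route for such an estimate. Since $\psi$ depends on $t$, the inequality is meant for a.e.\ $t\in(0,T)$, and your a.e.\ $(\bx,t)$ formulation already covers this.
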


Through a standard procedure, we get the following weak formulation of the problem \eqref{S01}--\eqref{S05}.

\begin{problem}\label{p1}
Find $\bu\in \cW$ and $p\in\cQ$ such that for a.e.\ $t\in (0,T)$,
\begin{align}
& \langle \bu^\prime(t),\bv\rangle_{\bVV}+a(\bu(t),\bv)-b(\bv,p(t))+I_{\Gamma_S}(\psi^0(\bu_\tau(t);\bv_\tau))
\ge \langle \fb(t),\bv\rangle_{\bVV}\quad \forall\,\bv\in \bV, \label{p1_1}\\[0.5mm]
& b(\bu(t),q)=0\quad\forall\,q\in Q, \label{p1_2}
\end{align}
and 
\begin{equation}
\bu(0)=\bu_0.  \label{p1_3}
\end{equation}
\end{problem}

Note that since $\cW\subset C([0,T];\bH)$, the initial condition \eqref{p1_3} is well-defined.

By allowing $\bv$ and $q$ to depend on $t$ and integrating \eqref{p1_1} and \eqref{p1_2} 
with respect to $t$, we obtain another weak formulation of the problem \eqref{S01}--\eqref{S05}.

\begin{problem}\label{p2}
Find $\bu\in \cW$ and $p\in\cQ$ such that 
\begin{align}
& \int_0^T\left[\langle \bu^\prime(t),\bv(t)\rangle_{\bVV}+a(\bu(t),\bv(t))-b(\bv(t),p(t))
+I_{\Gamma_S}(\psi^0(\bu_\tau(t);\bv_\tau(t)))\right] dt\nonumber\\[0.5mm]
&{}\qquad \ge \int_0^T\langle \fb(t),\bv(t)\rangle_{\bVV} dt\quad \forall\,\bv\in \cV, \label{p2_1}\\[0.5mm]
& \int_0^T b(\bu(t),q(t))\,dt=0\quad\forall\,q\in \cQ, \label{p2_2}
\end{align}
and 
\begin{equation}
\bu(0)=\bu_0.  \label{p2_3}
\end{equation}
\end{problem}

It turns out that the above two problems are equivalent in the sense that a pair of functions 
$(\bu,p)\in\cW\times\cQ$ is a solution of one problem if and only if it is a solution of the other
(\cite[Proposition 3.4]{HYZ26}).

\begin{proposition}\label{prop:equi}
Problem \ref{p1} and Problem \ref{p2} are equivalent.
\end{proposition}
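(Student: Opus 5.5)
We prove the two implications separately. The initial conditions \eqref{p1_3} and \eqref{p2_3} coincide, so only the variational relations need comparison.

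\emph{Problem \ref{p1} $\Rightarrow$ Problem \ref{p2}.} Let $(\bu,p)\in\cW\times\cQ$ solve Problem \ref{p1}. Take $\bv\in\cV$ and $q\in\cQ$. Since $\bv$ and $q$ are strongly measurable, we may pick representatives and apply \eqref{p1_1} with the fixed element $\bv(t)\in\bV$, and \eqref{p1_2} with $q(t)\in Q$. One must check that the exceptional null set does not depend on the test function. For this we use that $\bV$ and $Q$ are separable. We take countable dense subsets and the corresponding common null set. For $q$, density and continuity of $b(\bu(t),\cdot)$ suffice. For $\bv$, we pass to the limit using the upper semicontinuity of $\psi^0(\bu_\tau(t);\cdot)$ together with the trace inequality \eqref{trace}. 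Indeed, $\bv_\tau\mapsto I_{\Gamma_S}(\psi^0(\bu_\tau(t);\bv_\tau))$ is Lipschitz (hence continuous) on $L^2(\Gamma_S)^d$ thanks to $H(\psi)$(iii), via $|\psi^0(\bxi;\beeta)|\le c_0(1+|\bxi|)|\beeta|$. This yields \eqref{p1_1} for all $\bv\in\bV$ outside a single null set.

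Integrability of each term on $(0,T)$ follows from $\bu'\in\cV^*$, $\bu\in\cV$, $p\in\cQ$ and $\fb\in\cV^*$. For the $\psi^0$ term we use the bound $|\psi^0(\bu_\tau(t);\bv_\tau(t))|\le c_0(1+|\bu_\tau|)|\bv_\tau|$ and \eqref{trace}. Measurability of $t\mapsto I_{\Gamma_S}(\psi^0(\bu_\tau(t);\bv_\tau(t)))$ follows from the upper semicontinuity of $\psi^0$ and the measurability assumptions in $H(\psi)$(i),(ii), by the standard Carath\'eodory-type argument. Integrating in $t$ then gives \eqref{p2_1} and \eqref{p2_2}.

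\emph{Problem \ref{p2} $\Rightarrow$ Problem \ref{p1}.} This is the main step, and we localize in time. For \eqref{p1_2}: taking $q=\chi_E\,q_0$ with measurable $E\subset(0,T)$ and $q_0\in Q$ gives $\int_E b(\bu(t),q_0)\,dt=0$ for all $E$. Hence $b(\bu(t),q_0)=0$ a.e., and separability of $Q$ gives a common null set.

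For \eqref{p1_1}, fix $\bw\in\bV$ and a measurable $E\subset(0,T)$. Test \eqref{p2_1} with $\bv=\chi_E\bw+\chi_{(0,T)\setminus E}\cdot 0$. Positive homogeneity gives $\psi^0(\bu_\tau;\bzero)=0$, so
\[\int_E g_{\boldsymbol w}(t)\,dt\ge0,\]
where
\[g_{\boldsymbol w}(t)=\langle \bu'(t)-\fb(t),\bw\rangle_{\bVV}+a(\bu(t),\bw)-b(\bw,p(t))+I_{\Gamma_S}(\psi^0(\bu_\tau(t);\bw_\tau)),\]
and $g_{\boldsymbol w}\in L^1(0,T)$. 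Since $E$ is arbitrary, $g_{\boldsymbol w}(t)\ge0$ for a.e.\ $t$, with a null set $N_{\boldsymbol w}$.

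The obstacle is again the dependence of $N_{\boldsymbol w}$ on $\bw$, which we remove as in the first part. Take a countable dense subset $D\subset\bV$ and $N=\bigcup_{{\boldsymbol w}\in D}N_{\boldsymbol w}$. For $t\notin N$ (also excluding points where $\bu(t),\bu'(t),p(t),\fb(t)$ are undefined), approximate an arbitrary $\bv\in\bV$ by $\bw_k\in D$. The linear terms converge. The $\psi^0$ term converges because $|I_{\Gamma_S}(\psi^0(\bu_\tau;\bw_{k,\tau}))-I_{\Gamma_S}(\psi^0(\bu_\tau;\bv_\tau))|\le I_{\Gamma_S}(c_0(1+|\bu_\tau|)|\bw_{k,\tau}-\bv_\tau|)\to0$. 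This estimate follows from subadditivity of $\psi^0(\bxi;\cdot)$ and $H(\psi)$(iii), combined with \eqref{trace}. Hence \eqref{p1_1} holds for all $\bv\in\bV$ and a.e.\ $t$, completing the equivalence.
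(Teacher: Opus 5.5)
Your proof is correct. The paper gives no argument of its own here (it only cites \cite[Proposition 3.4]{HYZ26}), and what you wrote is the standard proof of such an equivalence: integrate in time for one direction; for the converse, test with $\chi_E\bv$, use $\psi^0(\cdot;\bzero)=0$, and remove the dependence of the null set on $\bv$ via separability of $\bV$ and the bound $|\psi^0(\bxi;\beeta_1)-\psi^0(\bxi;\beeta_2)|\le c_0(1+|\bxi|)\,|\beeta_1-\beeta_2|$. The density argument in your first direction is unnecessary, though harmless: in Problem~\ref{p1} the quantifier ``$\forall\,\bv\in\bV$'' already sits inside ``for a.e.\ $t$'', so that null set is uniform in $\bv$ by hypothesis.
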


To proceed further, we need function spaces on $\Gamma_S$:
\begin{align}
\bS&=L^2(\Gamma_S)^d, \label{bS}\\
\cS&=L^2(0,T;\bS). \label{cS}
\end{align}
Define a functional
\begin{equation}
\Psi(\bz)=I_{\Gamma_S}(\psi(\bz)),\quad \bz\in\bS.
\label{Psi}
\end{equation}
Denote by $\gamma\colon \bv\mapsto \bv_\tau$ a trace operator over $\bV$.  Note that $\gamma\in{\cal L}(\bV;\bS)$ 
is compact.  Since (cf.\ \cite[Theorem 3.47\,(iv)]{MOS2013})
\[ \Psi^0(\gamma\bv_1;\gamma\bv_2) \le I_{\Gamma_S}(\psi^0(\gamma\bv_1;\gamma\bv_2))\quad\forall\,\bv_1,\bv_2\in\bV,\]
we deduce from Lemma \ref{lem:psi} that
\begin{equation}
\Psi^0(\gamma\bv_1;\gamma\bv_2-\gamma\bv_1)+\Psi^0(\gamma\bv_2;\gamma\bv_1-\gamma\bv_2)
\le \alpha_\psi \lambda_\tau^{-1} \|\bv_1-\bv_2\|_{\boldsymbol V}^2\quad\forall\,\bv_1,\bv_2\in\bV.
\label{3.27a}
\end{equation}

Now introduce an auxiliary problem:

\begin{problem}\label{p3.5}
Find $\bu\in \cW$ and $p\in\cQ$ such that for a.e.\ $t\in (0,T)$,
\begin{align}
& \langle \bu^\prime(t),\bv\rangle_{\bVV}+a(\bu(t),\bv)-b(\bv,p(t))+\Psi^0(\gamma\bu(t);\gamma\bv))
\ge \langle \fb(t),\bv\rangle_{\bVV}\quad \forall\,\bv\in \bV, \label{3.28}\\[0.5mm]
& b(\bu(t),q)=0\quad\forall\,q\in Q, \label{3.29}
\end{align}
and 
\begin{equation}
\bu(0)=\bu_0.  \label{3.30}
\end{equation}
\end{problem}

Similar to Problem \ref{p2}, we have a global version of Problem \ref{p3.5}.

\begin{problem}\label{p3.6}
Find $\bu\in \cW$ and $p\in\cQ$ such that 
\begin{align}
& \int_0^T\left[\langle \bu^\prime(t),\bv(t)\rangle_{\bVV}+a(\bu(t),\bv(t))-b(\bv(t),p(t))
+\Psi^0(\gamma\bu(t);\gamma\bv(t))\right] dt\nonumber\\[0.5mm]
&{}\qquad \ge \int_0^T\langle \fb(t),\bv(t)\rangle_{\bVV}dt\quad \forall\,\bv\in \cV, \label{3.31}\\[0.5mm]
& \int_0^T b(\bu(t),q(t))\,dt=0\quad\forall\,q\in \cQ, \label{3.32}
\end{align}
and 
\begin{equation}
\bu(0)=\bu_0.  \label{3.33}
\end{equation}
\end{problem}

By adapting the proof of Proposition \ref{prop:equi} in \cite{HYZ26}, we can show the next result.

\begin{proposition}\label{prop:equi3.5}
Problem \ref{p3.5} and Problem \ref{p3.6} are equivalent.
\end{proposition}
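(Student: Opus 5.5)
The equivalence has two directions, and the argument mirrors that of Proposition \ref{prop:equi}, with the pointwise integrand $I_{\Gamma_S}(\psi^0(\cdot;\cdot))$ replaced by $\Psi^0(\gamma\,\cdot;\gamma\,\cdot)$. The properties of $\Psi^0$ used are:
\begin{itemize}
\item positive homogeneity and subadditivity in the second argument;
\item upper semicontinuity of $(\bz,\bw)\mapsto\Psi^0(\bz;\bw)$ on $\bS\times\bS$;
\item a growth bound $|\Psi^0(\bz;\bw)|\le c(1+\|\bz\|_{\boldsymbol S})\|\bw\|_{\boldsymbol S}$, which follows from $H(\psi)$(iii), since $\Psi$ is then Lipschitz on bounded sets of $\bS$ with $\partial\Psi(\bz)\subset I_{\Gamma_S}(\partial\psi(\bz))$.
\end{itemize}
The growth bound and the compactness of $\gamma$ guarantee that, for $\bu,\bv\in\cV$, the map $t\mapsto\Psi^0(\gamma\bu(t);\gamma\bv(t))$ is measurable (being upper semicontinuous composed with strongly measurable maps) and integrable. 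Hence the integrals in \eqref{3.31} make sense.

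\emph{Problem \ref{p3.5} $\Rightarrow$ Problem \ref{p3.6}.} Let $(\bu,p)$ solve Problem \ref{p3.5} and let $\bv\in\cV$. First take $\bv$ to be simple, $\bv=\sum_j \chi_{E_j}\bv_j$. Applying \eqref{3.28} with $\bv=\bv_j$ on each $E_j$ gives the inequality pointwise a.e.\ in $t$ with $\bv(t)$ inserted, and integrating yields \eqref{3.31} for simple functions. For general $\bv\in\cV$, choose simple $\bv_n\to\bv$ in $\cV$ with $\bv_n(t)\to\bv(t)$ in $\bV$ a.e. The linear terms pass to the limit. For the nonsmooth term, the upper semicontinuity of $\Psi^0(\gamma\bu(t);\cdot)$, together with the bound $\Psi^0(\gamma\bu(t);\gamma\bv_n(t))\le c(1+\|\bu(t)\|_{\boldsymbol V})\|\bv_n(t)\|_{\boldsymbol V}$, which is dominated after passing to a subsequence with an $L^2$ majorant, allows the reverse Fatou lemma:
\[
\limsup_n\int_0^T\Psi^0(\gamma\bu;\gamma\bv_n)\,dt\le\int_0^T\Psi^0(\gamma\bu;\gamma\bv)\,dt .
\]
This gives \eqref{3.31}. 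Relation \eqref{3.32} follows directly by integrating \eqref{3.29} with $q=q(t)$.

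\emph{Problem \ref{p3.6} $\Rightarrow$ Problem \ref{p3.5}.} Fix $\bv\in\bV$ and a measurable set $E\subset(0,T)$. Test \eqref{3.31} with $\bw=\bu+\chi_E\bv$. Outside $E$, the integrand reduces to $\langle\bu',\bu\rangle+a(\bu,\bu)-b(\bu,p)+\Psi^0(\gamma\bu;\gamma\bu)-\langle\fb,\bu\rangle$. To localize, I would instead use $\bw=\chi_E\bv+\chi_{E^c}\cdot 0$ directly, noting $\Psi^0(\cdot;0)=0$; this gives
\[
\int_E\bigl[\langle\bu',\bv\rangle+a(\bu,\bv)-b(\bv,p)+\Psi^0(\gamma\bu;\gamma\bv)-\langle\fb,\bv\rangle\bigr]dt\ge0
\]
for every measurable $E$. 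The integrand is in $L^1$, so it is nonnegative a.e.\ in $t$, but the null set depends on $\bv$. To remove this dependence, take a countable dense subset $\{\bv_k\}$ of the separable space $\bV$ and a common null set $N$. For $t\notin N$ and arbitrary $\bv$, pick $\bv_{k}\to\bv$. The linear terms converge, and upper semicontinuity of $\Psi^0(\gamma\bu(t);\gamma\,\cdot)$ (indeed Lipschitz continuity in the second argument) gives \eqref{3.28}. For \eqref{3.29}, take $q(t)=\chi_E(t)q$ in \eqref{3.32} and use a countable dense subset of $Q$ in the same way. The initial conditions \eqref{3.30} and \eqref{3.33} coincide.

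The main obstacle is the density step in the first direction: passing from simple test functions to general $\bv\in\cV$ in the nonsmooth term. This requires the measurability and integrability of $t\mapsto\Psi^0(\gamma\bu(t);\gamma\bv(t))$ and a dominating function for the reverse Fatou argument. I would obtain both from the linear growth of $\partial\Psi$, which gives the Lipschitz bound $|\Psi^0(\bz;\bw_1)-\Psi^0(\bz;\bw_2)|\le c(1+\|\bz\|_{\boldsymbol S})\|\bw_1-\bw_2\|_{\boldsymbol S}$. With this bound the limit is actually a full convergence statement, so no reverse Fatou lemma is needed.
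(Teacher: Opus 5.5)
Your proof is correct and is essentially the standard argument the paper intends when it says the result follows by adapting the proof of Proposition~\ref{prop:equi} from \cite{HYZ26}: in one direction you integrate the pointwise relations, with integrability coming from the linear growth of $\Psi^0$ in its second argument, and in the other you localize with test functions $\chi_E\bv$ and use a countable dense subset of $\bV$ (and of $Q$) to get a common null set. Your simple-function density step in the first direction is valid but not needed if the exceptional null set in \eqref{3.28} is independent of $\bv$, since then you can insert $\bv(t)$ directly.
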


A relation between the two pairs of problems, Problems \ref{p1} and \ref{p2} vs.\ Problems \ref{p3.5} and \ref{p3.6}, is the following.

\begin{proposition} \label{prop:3.8}
Any solution of Problem \ref{p3.5} is a solution of Problem \ref{p1}.  If Problem \ref{p3.5} has a 
solution and a solution of Problem \ref{p1} is unique, then both problems admit a unique solution and 
the solution is the same to the two problems.
\end{proposition}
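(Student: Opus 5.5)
The first claim reduces to a pointwise comparison of the two generalized directional derivatives. Let $(\bu,p)\in\cW\times\cQ$ solve Problem \ref{p3.5}. Since \eqref{3.29}--\eqref{3.30} coincide with \eqref{p1_2}--\eqref{p1_3}, only \eqref{p1_1} needs checking. Fix $t$ outside a null set on which \eqref{3.28} holds for all $\bv\in\bV$. By $H(\psi)$ and \cite[Theorem 3.47\,(iv)]{MOS2013}, which is exactly the inequality quoted before \eqref{3.27a}, with $\bv_1=\bu(t)$ and $\bv_2=\bv$ we have
\[ \Psi^0(\gamma\bu(t);\gamma\bv)\le I_{\Gamma_S}(\psi^0(\bu_\tau(t);\bv_\tau))\quad\forall\,\bv\in\bV. \]
Replacing the $\Psi^0$ term in \eqref{3.28} by this larger quantity keeps the inequality valid, and the result is precisely \eqref{p1_1}. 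Hence $(\bu,p)$ solves Problem \ref{p1}.

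The one point needing care is the time dependence of $\psi$. The cited inequality is applied for each fixed $t$ to $\psi(\cdot,t,\cdot)$, so $\Psi$ is implicitly $\Psi(t,\cdot)$. Under $H(\psi)$\,(i)--(iii), for a.e.\ $t$ this function is finite and locally Lipschitz on $\bS$, because the growth condition (iii) gives a Lipschitz bound on bounded sets. So the hypotheses of that theorem hold for a.e.\ $t$. The exceptional $t$-sets, one from \eqref{3.28} and one from $H(\psi)$, form a union of two null sets, which is harmless.

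For the second claim, suppose Problem \ref{p3.5} has a solution $(\bu,p)$ and Problem \ref{p1} has at most one solution. By the first part, $(\bu,p)$ solves Problem \ref{p1}, so Problem \ref{p1} has exactly one solution, namely $(\bu,p)$. If $(\tilde\bu,\tilde p)$ is any solution of Problem \ref{p3.5}, the first part shows it also solves Problem \ref{p1}, and uniqueness there forces $(\tilde\bu,\tilde p)=(\bu,p)$. Therefore Problem \ref{p3.5} has a unique solution, and it coincides with the unique solution of Problem \ref{p1}.

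The main obstacle is purely technical: justifying that the Clarke-derivative comparison applies to the time-dependent integrand, including measurability in $t$. The remaining steps are set-theoretic. If needed, I would add a remark that measurability in $t$ is not required, since the argument is pointwise in $t$.
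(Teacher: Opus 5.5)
Your proof is correct and follows the paper's argument: the first claim comes from substituting the pointwise bound $\Psi^0(\gamma\bu(t);\gamma\bv)\le I_{\Gamma_S}(\psi^0(\bu_\tau(t);\bv_\tau))$ into \eqref{3.28}, and the second claim is the immediate set-theoretic consequence. Your remarks on applying the bound for each fixed $t$ to $\psi(\cdot,t,\cdot)$, and on combining the exceptional null sets, are sound details that the paper leaves implicit.
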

\begin{proof}
The first statement follows from the property
\[ \Psi^0(\gamma\bu(t);\gamma\bv))\le I_{\Gamma_S}(\psi^0(\gamma\bu(t);\gamma\bv(t))).\]
The second statement is obvious.  \hfill 
\end{proof}

Starting with Problem \ref{p3.5}, we consider the next inclusion problem.

\begin{problem}\label{p3.9}
Find $\bu\in \cW$ and $p\in\cQ$ such that for a.e.\ $t\in (0,T)$,
\begin{align}
& \bu^\prime(t)+A\bu(t)-B^*p(t)+\gamma^*\partial\Psi(\gamma\bu(t))\ni \fb(t), \label{3.38}\\[0.5mm]
& B\bu(t)=0, \label{3.39}
\end{align}
and 
\begin{equation}
\bu(0)=\bu_0.  \label{3.39a}
\end{equation}
\end{problem}

This problem can be equivalently rewritten as the following.

\begin{problem}\label{p3.10}
Find $\bu\in \cW$, $p\in\cQ$ and $\bxi\in \cS^*$ such that for a.e.\ $t\in (0,T)$,
\begin{align}
& \bu^\prime(t)+A\bu(t)-B^*p(t)+\gamma^*\bxi(t) = \fb(t), \label{3.40}\\[0.5mm]
& \bxi(t) \in \partial\Psi(\gamma\bu(t)),\label{3.41}\\[0.5mm]
& B\bu(t)=0, \label{3.42}
\end{align}
and 
\begin{equation}
\bu(0)=\bu_0.  \label{3.42a}
\end{equation}
\end{problem}

We have the next result.

\begin{proposition}\label{prop:3.11}
If $(\bu,p)\in \cW\times\cQ$ is a solution of Problem \ref{p3.9}, or if $(\bu,p,\bxi)\in \cW\times\cQ
\times \cS^*$ is a solution of Problem \ref{p3.10}, then $(\bu,p)\in \cW\times\cQ$ is a solution of Problem \ref{p3.5}.
\end{proposition}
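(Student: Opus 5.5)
We first observe that Problem \ref{p3.9} and Problem \ref{p3.10} say the same thing. The inclusion \eqref{3.38} means exactly that for a.e.\ $t$ there is an element $\bxi(t)\in\partial\Psi(\gamma\bu(t))\subset\bS^*$ with $\fb(t)-\bu^\prime(t)-A\bu(t)+B^*p(t)=\gamma^*\bxi(t)$. So it suffices to treat a solution $(\bu,p,\bxi)$ of Problem \ref{p3.10}. (If one starts from Problem \ref{p3.9}, only the pointwise-in-$t$ selection is needed, and measurability of $\bxi$ plays no role in what follows.) The argument is pointwise in $t$, and the conditions $\bu\in\cW$, $p\in\cQ$ and the initial condition \eqref{3.42a}, which coincides with \eqref{3.30}, carry over unchanged.

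\textbf{Step 1: the constraint.} Fix $t$ outside a null set on which \eqref{3.40}--\eqref{3.42} hold. By definition of $B$, condition \eqref{3.42} reads $\langle B\bu(t),q\rangle_{Q^*\times Q}=b(\bu(t),q)=0$ for all $q\in Q$. This is \eqref{3.29}.

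\textbf{Step 2: the inequality.} For arbitrary $\bv\in\bV$, apply \eqref{3.40} as an identity in $\bV^*$ to $\bv$. Using the definitions of $A$, $B^*$ and $\gamma^*$, this gives
\[ \langle \bu^\prime(t),\bv\rangle_{\bVV}+a(\bu(t),\bv)-b(\bv,p(t))+\langle\bxi(t),\gamma\bv\rangle_{\bS^*\times\bS}=\langle \fb(t),\bv\rangle_{\bVV}. \]
By \eqref{3.41} and the definition of the Clarke subdifferential of $\Psi$ on $\bS$, we have $\langle\bxi(t),\gamma\bv\rangle_{\bS^*\times\bS}\le\Psi^0(\gamma\bu(t);\gamma\bv)$. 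Here we use that $\gamma\bv\in\bS$ is an admissible direction. Replacing the pairing by this upper bound yields \eqref{3.28}.

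\textbf{Potential obstacle.} The only point needing care is that $\Psi$ is locally Lipschitz on $\bS$, so that $\Psi^0$ and $\partial\Psi$ make sense. This follows from $H(\psi)$(i)--(iii) by the standard result (e.g.\ \cite[Theorem 3.47]{MOS2013}), already used in deriving \eqref{3.27a}. We also use the identification $\bS^*\cong\bS$ implicit in writing $\bxi\in\cS^*$. Beyond these points the proof is a direct unwinding of definitions.
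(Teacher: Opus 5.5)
Your proof is correct and follows the paper's argument. The paper also just observes that any $\bxi(t)\in\partial\Psi(\gamma\bu(t))$ satisfies $\langle\gamma^*\bxi(t),\bv\rangle\le\Psi^0(\gamma\bu(t);\gamma\bv)$ for all $\bv\in\bV$, so that \eqref{3.38} (equivalently \eqref{3.40}--\eqref{3.41}) implies \eqref{3.28}, while the constraint and the initial condition carry over unchanged.
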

\begin{proof}
Note that for all $\bxi(t)\in\partial\Psi(\gamma\bu(t))$, we have
\[ \langle\gamma^*\bxi(t),\bv\rangle_{\bVV}\le \Psi^0(\gamma\bu(t);\gamma\bv)\quad\forall\,\bv\in\bV.\]
Hence, \eqref{3.38} implies \eqref{3.28}.  \hfill
\end{proof}

\section{Reduced formulations for the velocity field}\label{sec:reduced}

In this section, we consider reduced forms of the problems from last section by eliminating the 
pressure variable $p$.  For this purpose, we introduce the following subspaces of $\bV$, $\cV$ and $\cW$:
\begin{align}
\bV_{\rm div} & =\left\{\bv\in\bV \mid {\rm div}\bv=0\ {\rm in}\ \Omega\right\},\label{bVdiv}\\
\cV_{\rm div} & =L^2(0,T;\bV_{\rm div}),\label{cVdiv}\\
\cW_{\rm div} & =\left\{\bv\in\cV_{\rm div} \mid \bv^\prime\in \cV_{\rm div}^* \right\}. \label{cWdiv}
\end{align}
Endowed by the inner products and norms in $\bV$, $\cV$ and $\cW$, these subspaces are themselves Hilbert spaces.
Since $\bV_{\rm div}\subset \bV$, we have $\bV^*\subset \bV_{\rm div}^*$ and the duality pairing 
$\langle\cdot,\cdot\rangle_{\bVV}$ is automatically extended to $\bV_{\rm div}^*\times \bV_{\rm div}$ 
with the property
\begin{equation}
\langle \bg,\bv\rangle_{\bVVdiv} = \langle \bg,\bv\rangle_{\bVV}\quad\forall\,\bg\in \bV^*,\,\bv\in \bV_{\rm div}.
\label{4.3a}
\end{equation}
A similar comment holds regarding the duality pairing on $\cV_{\rm div}^*\times \cV_{\rm div}$.  For $\bg\in \bV^*$ and $\bv\in \bV_{\rm div}$,
since $\|\bg\|_{{\boldsymbol V}^*_{\rm div}}\le \|\bg\|_{{\boldsymbol V}^*}$ and 
$\|\bv\|_{{\boldsymbol V}_{\rm div}}=\|\bv\|_{\boldsymbol V}$, we have the inequality
\[ \left| \langle\bg,\bv\rangle_{\bVVdiv} \right| \le \|\bg\|_{{\boldsymbol V}^*}\|\bv\|_{\boldsymbol V}. \]

Denote by $\gamma_{\rm div}$ the restriction of $\gamma\in \cL(\bV;\bS)$ on $\bV_{\rm div}$: 
$\gamma_{\rm div}\in \cL(\bV_{\rm div};\bS)$.  
The reduced forms of Problem \ref{p3.5} and Problem \ref{p3.6} are the following.

\begin{problem}\label{p3.5div}
Find $\bu\in \cW_{\rm div}$ such that for a.e.\ $t\in (0,T)$,
\begin{align}
\langle \bu^\prime(t),\bv\rangle_{\bVVdiv}+a(\bu(t),\bv)+\Psi^0(\gamma_{\rm div}\bu(t);\gamma_{\rm div}\bv)
\ge \langle \fb(t),\bv\rangle_{\bVVdiv}\quad \forall\,\bv\in \bV_{\rm div}, \label{3.51}
\end{align}
and 
\begin{equation}
\bu(0)=\bu_0.  \label{3.51a}
\end{equation}
\end{problem}

\begin{problem}\label{p3.6div}
Find $\bu\in \cW_{\rm div}$ such that 
\begin{align}
& \int_0^T\left[\langle \bu^\prime(t),\bv(t)\rangle_{\bVVdiv}+a(\bu(t),\bv(t))
+\Psi^0(\gamma_{\rm div}\bu(t);\gamma_{\rm div}\bv(t))\right] dt\nonumber\\[0.5mm]
&{}\qquad \ge \int_0^T\langle \fb(t),\bv(t)\rangle_{\bVVdiv}dt\quad \forall\,\bv\in \cV_{\rm div} \label{3.52}
\end{align}
and 
\begin{equation}
\bu(0)=\bu_0.  \label{3.52a}
\end{equation}
\end{problem}

Similar to Proposition \ref{prop:equi3.5}, we can show that Problem \ref{p3.5div} and Problem \ref{p3.6div}
are equivalent.

The counterpart of Problem \ref{p3.9} is

\begin{problem}\label{p3.9div}
Find $\bu\in \cW_{\rm div}$ such that for a.e.\ $t\in (0,T)$,
\begin{align}
\bu^\prime(t)+A\bu(t)+\gamma_{\rm div}^*\partial\Psi(\gamma_{\rm div}\bu(t))\ni \fb(t), \label{3.53}
\end{align}
and 
\begin{equation}
\bu(0)=\bu_0.  \label{3.53a}
\end{equation}
\end{problem}

This problem is interpreted as 

\begin{problem}\label{p3.10div}
Find $\bu\in \cW_{\rm div}$ and $\bxi\in \cS^*$ such that for a.e.\ $t\in (0,T)$,
\begin{align}
& \bu^\prime(t)+A\bu(t)+\gamma_{\rm div}^*\bxi(t) = \fb(t)\quad{\rm in}\ \bV^*_{\rm div}, \label{3.54}\\[0.5mm]
& \bxi(t) \in \partial\Psi(\gamma_{\rm div}\bu(t)),\label{3.55}
\end{align}
and 
\begin{equation}
\bu(0)=\bu_0.  \label{3.54a}
\end{equation}
\end{problem}

Similar to Proposition \ref{prop:3.11}, we claim that if $(\bu,\bxi)\in \cW_{\rm div}\times \cS^*$ is 
a solution of Problem \ref{p3.10div}, then $\bu$ is a solution of Problem \ref{p3.5div}.

\section{Existence and uniqueness for the velocity field}\label{sec:exi}

In this section, we investigate the existence and uniqueness of a solution to Problem \ref{p3.10div}.  
The main effort is on the existence.  For this purpose, we consider a temporally semi-discrete approximation 
of Problem \ref{p3.10div} based on the backward Euler difference for the time derivative.  In the literature,
such an approximation is known as the Rothe method.  For a fixed positive integer $N\in \mathbb{N}$,
define the time step-size $k=T/N$. Denote the nodes $t_n=n\,k$, $0\le n\le N$.
Introduce the piecewise constant interpolant of $\fb$ by
\[ \fb^k_n=\frac{1}{k}\int_{t_{n-1}}^{t_n}\fb(t)\,dt,\quad 1\le n\le N.\]
It is easy to verify that
\[ \|\fb^k_n\|^2_{{\boldsymbol V}^*}\le\frac{1}{k}\int_{t_{n-1}}^{t_n}\|\fb(t)\|^2_{{\boldsymbol V}^*} dt,
\quad 1\le n\le N,\]
and consequently,
\begin{equation}
k\sum_{j=1}^n \|\fb^k_n\|^2_{{\boldsymbol V}^*}\le\int_0^{t_n}\|\fb(t)\|^2_{{\boldsymbol V}^*} dt
\le \|\fb\|^2_{{\cal V}^*}, \quad 1\le n\le N.
\label{5.1a}
\end{equation}

The initial value $\bu_0\in\bH$ can be approximated by a sequence $\{\bu_{k,0}\}\subset \bV$ such that
$\bu_{k,0}\to \bu_0$ in $\bH$ and $\|\bu_{k,0}\|_{\boldsymbol V}\le c\,k^{-1/2}$ for some constant $c>0$,
cf.\ \cite[Theorem 8.9]{Rou2013}.

Then, the semi-discrete approximation of Problem \ref{p3.10div} is the following.

\begin{problem}\label{p3.10divk} 
Find $\bu^k:=\{\bu^k_n\}_{n=0}^N\subset \bV_{\rm div}$ and 
$\bxi^k:=\{\bxi^k_n\}_{n=1}^N\subset \bS^*$ such that for $n=1,\ldots,N$,
\begin{align}
& \frac{1}{k}(\bu^k_n-\bu^k_{n-1})+A\bu^k_n+\gamma_{\rm div}^*\bxi^k_n=\fb^k_n, \label{S1}\\
& \bxi^k_n\in \partial\Psi(\gamma_{\rm div}\bu^k_n), \label{S2}
\end{align}
and 
\begin{equation}
\bu^k_0=\bu_{k,0}.  \label{S0}
\end{equation}
\end{problem}

We assume 
\begin{equation}
m:=2\mu-\alpha_\psi \lambda_\tau^{-1}>0.
\label{small}
\end{equation}
This is called a smallness condition in the literature since the condition sets an upper bound on the
value $\alpha_\psi$ which reflects the strength of the non-convexity of $\psi$.  

The problem \eqref{S1}--\eqref{S2} is equivalent to
\begin{equation}
\frac{1}{k}\langle\bu^k_n-\bu^k_{n-1},\bv\rangle_{\bVVdiv}+a(\bu^k_n,\bv)
+\Psi^0(\gamma_{\rm div}\bu^k_n;\gamma_{\rm div}\bv)
\ge \langle\fb^k_n,\bv\rangle_{\bVVdiv}\quad\forall\,\bv\in \bV_{\rm div}.
\label{3.63}
\end{equation}
The equivalence is due to the facts that
\[ \Psi^0(\gamma_{\rm div}\bu^k_n;\gamma_{\rm div}\bv)\ge 
\langle\bxi^k_n,\gamma_{\rm div}\bv\rangle_{{\boldsymbol S}^*\times {\boldsymbol S}}
\quad\forall\,\bv\in \bV_{\rm div}\]
for any $\bxi^k_n\in \partial\Psi(\gamma_{\rm div}\bu^k_n)$, and given $\bz\in \bS$,
\[ \Psi^0(\gamma_{\rm div}\bu^k_n;\bz)=\max\{\langle\bzeta,\bz\rangle_{{\boldsymbol S}^*\times {\boldsymbol S}}
\mid \bzeta\in \partial\Psi(\gamma_{\rm div}\bu^k_n)\}
=\langle\bxi^k_n,\bz\rangle_{{\boldsymbol S}^*\times {\boldsymbol S}}\]
for one element $\bxi^k_n\in \partial\Psi(\gamma_{\rm div}\bu^k_n)$.  Then,
\[ \Psi^0(\gamma_{\rm div}\bu^k_n;\gamma_{\rm div}\bv)
=\langle\bxi^k_n,\gamma_{\rm div}\bv\rangle_{{\boldsymbol S}^*\times {\boldsymbol S}}
\quad\forall\,\bv\in \bV_{\rm div}.  \]

Rewrite \eqref{3.63} as
\begin{equation}
\langle\bu^k_n,\bv\rangle_{\bVVdiv}+k\,a(\bu^k_n,\bv)+k\,\Psi^0(\gamma_{\rm div}\bu^k_n;\gamma_{\rm div}\bv)
\ge \langle\tilde{\fb}^k_n,\bv\rangle_{\bVVdiv} \quad\forall\,\bv\in \bV_{\rm div}.
\label{3.64}
\end{equation}
where
\[ \tilde{\fb}^k_n=k\,\fb^k_n+\bu^k_{n-1}. \]

By \cite[Theorem 5.30]{Han2024}, if 
\begin{equation}
\alpha_\psi \lambda_\tau^{-1} k<1+2\,\mu\,k,  \label{3.65}
\end{equation}
then \eqref{3.64} has a unique solution $\bu^k_n\in \bV_{\rm div}$, which is the minimizer of the functional
\[ \frac{1}{2}\,\|\bv\|_{\boldsymbol H}^2 + k\,\mu\,\|\bv\|_{\boldsymbol V}^2
+k\,\Psi(\gamma_{\rm div}\bv)-\langle\tilde{\fb}^k_n,\bv\rangle_{\bVVdiv}\]
over $\bV_{\rm div}$.

The condition \eqref{3.65} is guaranteed by the assumption \eqref{small};
alternatively, it is guaranteed if $k<\lambda_\tau/\alpha_\psi$ without the assumption \eqref{small}.

Summarizing, we have the following result on Problem \ref{p3.10divk}.

\begin{theorem}\label{thm:dis1}
Assume \eqref{assump1}, $H(\psi)$ and \eqref{small}.  Then, Problem \ref{p3.10divk} has a solution 
$(\bu^k,\bxi^k)\subset \bV_{\rm div}\times \bS^*$ and $\bu^k$ is unique.
\end{theorem}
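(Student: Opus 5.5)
The plan is to argue by induction on $n=1,\ldots,N$, solving one stationary hemivariational inequality per time step. The initial value $\bu^k_0=\bu_{k,0}$ is given by \eqref{S0}. Note that $\bu_{k,0}\in\bV$ but need not lie in $\bV_{\rm div}$. I would handle this in one of two ways:
\begin{itemize}
\item either choose the approximation $\bu_{k,0}$ from $\bV_{\rm div}$ (this is possible when $\bu_0$ lies in the $\bH$-closure of $\bV_{\rm div}$);
\item or simply observe that only $\bu^k_{n-1}\in\bH\subset\bV_{\rm div}^*$ is needed to define $\tilde{\fb}^k_n$.
\end{itemize}
So suppose $\bu^k_{n-1}\in\bH$ is known. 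Then $\tilde{\fb}^k_n=k\,\fb^k_n+\bu^k_{n-1}$ belongs to $\bV_{\rm div}^*$. Here $\fb^k_n\in\bV^*\subset\bV^*_{\rm div}$, since $\fb\in\cV^*$ gives $\fb^k_n$ well defined by \eqref{5.1a}, and $\bH$ embeds into $\bV_{\rm div}^*$ through the $L^2$ pairing.

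For the step problem \eqref{3.64}, I would invoke \cite[Theorem 5.30]{Han2024} on the Hilbert space $\bV_{\rm div}$. The operator is $\bv\mapsto \bv+kA\bv$, which is strongly monotone with constant $1+2\mu k$ with respect to $\|\cdot\|_{\boldsymbol V}$, up to the $\bH$ part, which only helps. The nonsmooth term is $k\Psi\circ\gamma_{\rm div}$. This theorem needs $\Psi$ to be locally Lipschitz on $\bS$ with a subdifferential of linear growth, which follows from $H(\psi)$(i)--(iii) by \cite[Theorem 3.47]{MOS2013}. It also needs the relaxed monotonicity estimate \eqref{3.27a} restricted to $\bV_{\rm div}$. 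The solvability condition of that theorem is \eqref{3.65}, and \eqref{small} gives $\alpha_\psi\lambda_\tau^{-1}k<2\mu k<1+2\mu k$. Hence \eqref{3.64} has a unique solution $\bu^k_n\in\bV_{\rm div}$.

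Next I would recover $\bxi^k_n$. The argument before the theorem shows that \eqref{3.64} (equivalently \eqref{3.63}) yields some $\bxi^k_n\in\partial\Psi(\gamma_{\rm div}\bu^k_n)$ satisfying \eqref{S1}. I would carry this out concretely as follows. Rewrite \eqref{3.63} as
\[\Psi^0(\gamma_{\rm div}\bu^k_n;\gamma_{\rm div}\bv)\ge\langle \bg,\bv\rangle\quad\forall\,\bv\in\bV_{\rm div},\]
where $\bg=\fb^k_n-k^{-1}(\bu^k_n-\bu^k_{n-1})-A\bu^k_n$. The left side equals the support function of $\gamma_{\rm div}^*\partial\Psi(\gamma_{\rm div}\bu^k_n)$. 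That set is convex, and weak* compact because $\partial\Psi(\cdot)$ is bounded, convex and weak* compact in $\bS^*$ and $\gamma_{\rm div}^*$ is continuous. By a Hahn--Banach separation argument, $\bg$ lies in this set. So $\bg=\gamma_{\rm div}^*\bxi^k_n$ for some $\bxi^k_n\in\partial\Psi(\gamma_{\rm div}\bu^k_n)$, which is \eqref{S1}--\eqref{S2}.

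Uniqueness of $\bu^k$ then follows by induction, since the solution at each step is unique given the previous iterate. The same conclusion can be checked directly: subtract two solutions of \eqref{3.63}, test with their difference, and use \eqref{3.27a}. This gives
\[(k^{-1}+2\mu-\alpha_\psi\lambda_\tau^{-1})\,\|\bw\|^2\le 0\]
for the difference $\bw$, in the appropriate norms. The main obstacle is this $\bxi$-recovery step, that is, the separation argument showing that the range condition holds with an element of the subdifferential. The step that needs care is the initial datum and the meaning of the pairing $\langle\bu^k_{n-1},\bv\rangle$ when $n=1$. Uniqueness of $\bxi^k$ is not claimed, since $\gamma_{\rm div}^*$ need not be injective.
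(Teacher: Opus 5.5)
Your proposal is correct and follows the same overall route as the paper: solve the step problem \eqref{3.64} on $\bV_{\rm div}$ by \cite[Theorem 5.30]{Han2024} under \eqref{small}, get uniqueness of $\bu^k$ by induction, and pass from the hemivariational inequality \eqref{3.63} back to the inclusion \eqref{S1}--\eqref{S2}.

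You differ from the paper in that last step, and your version is the sound one. The paper fixes a direction $\bz$ and picks $\bxi^k_n\in\partial\Psi(\gamma_{\rm div}\bu^k_n)$ attaining the maximum in $\Psi^0(\gamma_{\rm div}\bu^k_n;\bz)$. It then asserts $\Psi^0(\gamma_{\rm div}\bu^k_n;\gamma_{\rm div}\bv)=\langle\bxi^k_n,\gamma_{\rm div}\bv\rangle$ for all $\bv$. That does not follow, because the maximizer depends on the direction; the claim would make $\Psi^0(\gamma_{\rm div}\bu^k_n;\cdot)$ linear. Your argument is exactly what is needed to produce $\bxi^k_n$. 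The functional $\bg$ is dominated by the support function of the convex, weak$^*$ compact set $\gamma_{\rm div}^*\partial\Psi(\gamma_{\rm div}\bu^k_n)$, so it lies in that set by separation.

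Your remark on the initial datum is also a legitimate point the paper passes over. As literally stated, Problem \ref{p3.10divk} requires $\bu^k_0=\bu_{k,0}\in\bV_{\rm div}$. For a general $\bu_0\in\bH$ this cannot be arranged together with $\bu_{k,0}\to\bu_0$ in $\bH$, unless $\bu_0$ lies in the $\bH$-closure of $\bV_{\rm div}$.

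There are two small corrections, neither affecting the conclusion.
\begin{itemize}
\item The map $\bv\mapsto\bv+kA\bv$ is not strongly monotone with constant $1+2\mu k$ with respect to $\|\cdot\|_{\boldsymbol V}$, since $\|\bv\|_{\boldsymbol H}^2$ is not bounded below by a multiple of $\|\bv\|_{\boldsymbol V}^2$ on $\bV_{\rm div}$. The usable constant is $2\mu k$, so the condition actually needed is $\alpha_\psi\lambda_\tau^{-1}k<2\mu k$, i.e.\ \eqref{small}. You do record this inequality in your chain, so nothing breaks; the paper's condition \eqref{3.65} carries the same imprecision.
\item Your uniqueness estimate should read $k^{-1}\|\bw\|_{\boldsymbol H}^2+(2\mu-\alpha_\psi\lambda_\tau^{-1})\|\bw\|_{\boldsymbol V}^2\le 0$, rather than a single norm multiplied by $k^{-1}+2\mu-\alpha_\psi\lambda_\tau^{-1}$.
\end{itemize}
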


\begin{remark}
On uniqueness of $\bxi^k$: The equation \eqref{S1} implies that $\gamma_{\rm div}^*\bxi^k_n$ is unique, 
i.e., the value $\langle\bxi^k_n,\gamma_{\rm div}\bv\rangle_{{\boldsymbol S}^*\times {\boldsymbol S}}
=\langle\bxi^k_n,\bv_\tau\rangle_{{\boldsymbol S}^*\times {\boldsymbol S}}$ is uniquely
defined with respect to $\bv_\tau$ for $\bv\in \bV_{\rm div}$.  In other words, the normal component
of $\bxi^k_n$ does not matter in \eqref{S1} and in terms of the tangential component of $\bxi^k_n$, it is 
unique.  Alternatively, we can choose $\bxi^k_n$ such that its normal component vanishes, or equivalently, 
choose $\bxi^k_n$ to be the minimal-norm element of the second solution component among all 
solutions $(\bu^k_n,\bxi^k_n)$ of the problem \eqref{S1}--\eqref{S2}.
\end{remark}

Then, we turn to a derivation of upper bounds on the semi-discrete solutions.

\begin{lemma}\label{lem:4.4}
Assume \eqref{assump1}, $H(\psi)$ and \eqref{small}. Then, there is a constant $c>0$ such that
\begin{align}
& \max_{0\le n\le N}\|\bu^k_n\|_{\boldsymbol H}\le c, \label{C1} \\[0.2mm]
& \sum_{n=1}^N \|\bu^k_n-\bu^k_{n-1}\|_{\boldsymbol H}^2\le c, \label{C2} \\[0.2mm]
& k \sum_{n=1}^N \|\bu^k_n\|_{\boldsymbol V}^2\le c, \label{C3}\\[0.2mm]
& \|\bxi^k_n\|_{{\boldsymbol S}^*}\le c\left(1+\|\bu^k_n\|_{\boldsymbol V}\right). \label{S14a}
\end{align}
\end{lemma}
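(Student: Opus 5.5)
Test \eqref{S1} with $\bv=\bu^k_n\in\bV_{\rm div}$ and use the elementary identity
\[ 2\,(\bu^k_n-\bu^k_{n-1},\bu^k_n)_{\boldsymbol H}=\|\bu^k_n\|_{\boldsymbol H}^2-\|\bu^k_{n-1}\|_{\boldsymbol H}^2+\|\bu^k_n-\bu^k_{n-1}\|_{\boldsymbol H}^2. \]
Here the duality pairing $\langle\bu^k_n-\bu^k_{n-1},\bu^k_n\rangle$ reduces to the $\bH$ inner product, since $\bu^k_n\in\bV_{\rm div}\subset\bH$. This gives
\[ \frac{1}{2k}\left(\|\bu^k_n\|_{\boldsymbol H}^2-\|\bu^k_{n-1}\|_{\boldsymbol H}^2+\|\bu^k_n-\bu^k_{n-1}\|_{\boldsymbol H}^2\right)+2\mu\|\bu^k_n\|_{\boldsymbol V}^2+\langle\bxi^k_n,\gamma_{\rm div}\bu^k_n\rangle_{{\boldsymbol S}^*\times{\boldsymbol S}}=\langle\fb^k_n,\bu^k_n\rangle. \]

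The main point is the friction term, which must be controlled with only the part $\alpha_\psi\lambda_\tau^{-1}\|\bu^k_n\|_{\boldsymbol V}^2$ of the coercivity used up, so that the smallness condition \eqref{small} suffices. We use the relaxed monotonicity $H(\psi)$(iv) pointwise on $\Gamma_S$. The element $\bxi^k_n\in\partial\Psi(\gamma\bu^k_n)$ can be represented (cf.\ \cite[Theorem 3.47]{MOS2013}) by an $L^2$ function $\bxi^k_n(\bx)\in\partial\psi(\bx,t,\bu^k_{n,\tau}(\bx))$; if $\psi$ depends on $t$, the time argument is interpreted at the node as in the definition of $\Psi$. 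Comparing with any measurable selection $\beeta_0(\bx)\in\partial\psi(\bx,\cdot,\bzero)$ gives
\[ \bxi^k_n\cdot\bu^k_{n,\tau}\ge-\alpha_\psi|\bu^k_{n,\tau}|^2+\beeta_0\cdot\bu^k_{n,\tau}. \]
By $H(\psi)$(iii), $|\beeta_0|\le c_0$. Integrating over $\Gamma_S$ and applying the trace inequality \eqref{trace} yields
\[ \langle\bxi^k_n,\gamma_{\rm div}\bu^k_n\rangle\ge-\alpha_\psi\lambda_\tau^{-1}\|\bu^k_n\|_{\boldsymbol V}^2-c\,\|\bu^k_n\|_{\boldsymbol V}. \]
If the pointwise representation is inconvenient, the alternative is the abstract route: \eqref{3.27a} with $\bv_2=\bzero$, combined with $\Psi^0(\gamma\bu;-\gamma\bu)\ge\langle\bxi,-\gamma\bu\rangle$ and $\Psi^0(\bzero;\gamma\bu)\le c\|\gamma\bu\|_{\boldsymbol S}$, gives the same lower bound.

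Substituting this bound, the coercive term becomes $m\|\bu^k_n\|_{\boldsymbol V}^2$. The right-hand side is bounded by $\|\fb^k_n\|_{{\boldsymbol V}^*}\|\bu^k_n\|_{\boldsymbol V}$. Both linear terms are absorbed with \eqref{mCS}, taking $\epsilon=m/4$ each, so that
\[ \|\bu^k_n\|_{\boldsymbol H}^2-\|\bu^k_{n-1}\|_{\boldsymbol H}^2+\|\bu^k_n-\bu^k_{n-1}\|_{\boldsymbol H}^2+m\,k\,\|\bu^k_n\|_{\boldsymbol V}^2\le c\,k\left(1+\|\fb^k_n\|_{{\boldsymbol V}^*}^2\right). \]
Summing over $n=1,\dots,l$ gives
\[ \|\bu^k_l\|_{\boldsymbol H}^2+\sum_{n=1}^l\|\bu^k_n-\bu^k_{n-1}\|_{\boldsymbol H}^2+m\,k\sum_{n=1}^l\|\bu^k_n\|_{\boldsymbol V}^2\le\|\bu_{k,0}\|_{\boldsymbol H}^2+c\,T+c\,\|\fb\|_{{\cal V}^*}^2, \]
where \eqref{5.1a} controls the data sum. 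Since $\bu_{k,0}\to\bu_0$ in $\bH$, the quantity $\|\bu_{k,0}\|_{\boldsymbol H}$ is bounded uniformly in $k$. This yields \eqref{C1}, \eqref{C2} and \eqref{C3} simultaneously; no discrete Gronwall argument is needed. The case $n=0$ of \eqref{C1} is just the boundedness of $\bu_{k,0}$.

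Finally, \eqref{S14a} follows from the growth condition. By $H(\psi)$(iii), $|\bxi^k_n(\bx)|\le c_0(1+|\bu^k_{n,\tau}(\bx)|)$, hence
\[ \|\bxi^k_n\|_{{\boldsymbol S}^*}\le c\left(1+\|\gamma\bu^k_n\|_{\boldsymbol S}\right)\le c\left(1+\|\bu^k_n\|_{\boldsymbol V}\right) \]
by \eqref{trace}. The one delicate point, which I expect to be the main obstacle, is justifying that the element $\bxi^k_n\in\partial\Psi(\gamma\bu^k_n)\subset\bS^*$ admits this pointwise representation. This should follow from the Aubin–Clarke characterization of subgradients of integral functionals, \cite[Theorem 3.47]{MOS2013}, under $H(\psi)$.
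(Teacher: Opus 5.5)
Your proof is correct and follows the paper's argument: test \eqref{S1} with $\bu^k_n$, use the telescoping identity, bound the friction term below by $-\alpha_\psi\lambda_\tau^{-1}\|\bu^k_n\|_{\boldsymbol V}^2$ minus a linear term, absorb with \eqref{mCS} at $\epsilon=m/4$, and sum; the only difference is that for the friction term the paper takes exactly your alternative abstract route, \eqref{3.27a} with $\bv_1=\bu^k_n$, $\bv_2=\bzero$, rather than the pointwise Aubin--Clarke representation. In that alternative, the bound actually needed is the lower bound $\Psi^0(\bzero;\gamma\bu)\ge-\Psi^0(\bzero;-\gamma\bu)\ge-c\,\|\gamma\bu\|_{\boldsymbol S}$, which follows from sublinearity of $\Psi^0(\bzero;\cdot)$, not the upper bound you wrote.
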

\begin{proof}
Apply \eqref{S1} to $\bv=\bu^k_n$:
\begin{equation}
\frac{1}{k}(\bu^k_n-\bu^k_{n-1},\bu^k_n)_{\boldsymbol H}+a(\bu^k_n,\bu^k_n)
+\langle\bxi^k_n,\gamma_{\rm div}\bu^k_n\rangle_{{\boldsymbol S}^*\times {\boldsymbol S}}
=\langle\fb^k_n,\bu^k_n\rangle_{\bVVdiv}.\label{5.11a}
\end{equation}
For the first term on the left hand side, 
\[(\bu^k_n-\bu^k_{n-1},\bu^k_n)_{\boldsymbol H}
=\|\bu^k_n\|_{\boldsymbol H}^2-\|\bu^k_{n-1}\|_{\boldsymbol H}^2+\|\bu^k_n-\bu^k_{n-1}\|_{\boldsymbol H}^2.\]
For the third term on the left hand side, using \eqref{S2}, \eqref{3.27a} and $H(\psi)$,
\begin{align*}
\langle\bxi^k_n,\gamma_{\rm div}\bu^k_n\rangle_{{\boldsymbol S}^*\times {\boldsymbol S}} 
& = - \langle\bxi^k_n,-\gamma_{\rm div}\bu^k_n\rangle_{{\boldsymbol S}^*\times {\boldsymbol S}}\\
& \ge -\Psi^0(\gamma_{\rm div}\bu^k_n;-\gamma_{\rm div}\bu^k_n)\\
&=-\left[\Psi^0(\gamma_{\rm div}\bu^k_n;-\gamma_{\rm div}\bu^k_n)+\Psi^0(\bzero;\gamma_{\rm div}\bu^k_n)
\right]+\Psi^0(\bzero;\gamma_{\rm div}\bu^k_n)\\
&\ge -\alpha_\psi\lambda_\tau^{-1} \|\bu^k_n\|_{\boldsymbol V}^2-c_1-c_2\|\bu^k_n\|_{\boldsymbol V}.
\end{align*}
Then apply the modified Cauchy-Schwarz inequality \eqref{mCS} to bound the term $c_2\|\bu^k_n\|_{\boldsymbol V}$ 
to find that for any $\epsilon>0$, there is an $\epsilon$-dependent constant $c_3$ such that
\[ \langle\bxi^k_n,\gamma_{\rm div}\bu^k_n\rangle_{{\boldsymbol S}^*\times {\boldsymbol S}}
\ge -\left(\alpha_\psi\lambda_\tau^{-1}+\epsilon\right)\|\bu^k_n\|_{\boldsymbol V}^2-c_3.\]
For the right hand side of \eqref{5.11a}, 
\[ \langle\fb^k_n,\bu^k_n\rangle_{\bVVdiv}\le\|\fb^k_n\|_{{\boldsymbol V}^*}\|\bu^k_n\|_{{\boldsymbol V}}, \]
and then apply the modified Cauchy-Schwarz inequality to get an $\epsilon$-dependent constant $c$ such that
\[ \langle\fb^k_n,\bu^k_n\rangle_{\bVVdiv}\le \epsilon\,\|\bu^k_n\|_{{\boldsymbol V}}^2
+c\,\|\fb^k_n\|_{{\boldsymbol V}^*}^2.\]
Choose $\epsilon=m/4$ for $m$ defined in \eqref{small}.  Then, we derive from \eqref{5.11a} that
\[ \frac{1}{2\,k}\left(\|\bu^k_n\|_{\boldsymbol H}^2-\|\bu^k_{n-1}\|_{\boldsymbol H}^2
+\|\bu^k_n-\bu^k_{n-1}\|_{\boldsymbol H}^2\right) + \frac{m}{2}\,\|\bu^k_n\|_{\boldsymbol V}^2
\le \frac{c_4}{2}\left(1+\|\fb^k_n\|_{{\boldsymbol V}^*}^2\right),\]
or 
\[ \|\bu^k_n\|_{\boldsymbol H}^2-\|\bu^k_{n-1}\|_{\boldsymbol H}^2
+\|\bu^k_n-\bu^k_{n-1}\|_{\boldsymbol H}^2 + m\, k\,\|\bu^k_n\|_{\boldsymbol V}^2
\le c_4 k\left(1+\|\fb^k_n\|_{{\boldsymbol V}^*}^2\right). \]
Change $n$ to $j$ in the above inequality and add the inequality for $j$ from 1 to $n$:
\begin{align*}
& \|\bu^k_n\|_{\boldsymbol H}^2 +\sum_{j=1}^n \|\bu^k_j-\bu^k_{j-1}\|_{\boldsymbol H}^2
+m\, k\sum_{j=1}^n \|\bu^k_j\|_{\boldsymbol V}^2\\
&\qquad \le \|\bu_{k,0}\|_{\boldsymbol H}^2 +c_4 n\,k+c_4 k\sum_{j=1}^n \|\fb^k_j\|_{\boldsymbol V^*}^2.
\end{align*}
Recall that $\bu_{k,0}\to \bu_0$ in $\bH$, implying the uniform boundedness of $\|\bu_{k,0}\|_{\boldsymbol H}$
with respect to $k$.  Note that $n\,k\le T$ for any $1\le n\le N$.  By \eqref{5.1a}, the third term on the 
right side of the above inequality is also uniformly bounded with respect to $k$.
Hence, \eqref{C1}, \eqref{C2} and \eqref{C3} hold.

From \eqref{S2} and assumptions on $\Psi$, 
\[ \|\bxi^k_n\|_{{\boldsymbol S}^*}\le c\left(1+\|\gamma_{\rm div}\bu^k_n\|_{\boldsymbol S}\right)
\le c\left(1+\|\bu^k_n\|_{\boldsymbol V}\right).\]
Hence, \eqref{S14a} holds. \hfill
\end{proof}

We turn to a convergence analysis of the Rothe method.  Define a continuous piecewise linear function
$\bu_k\colon [0,T]\to \bV_{\rm div}$ and three piecewise constant functions $\overline{\bu}_k\colon [0,T]\to \bV_{\rm div}$, 
$\overline{\bxi}_k\colon [0,T]\to \bS^*$, and $\overline{\fb}_k\colon [0,T]\to {\boldsymbol V}^*$ by
\[ \bu_k(t)=\bu^k_n+\left(\frac{t}{k}-n\right)(\bu^k_n-\bu^k_{n-1}),\quad 
\overline{\bu}_k(t)=\bu^k_n, \quad \overline{\bxi}_k(t)=\bxi^k_n, \quad \overline{\fb}_k(t)=\fb^k_n \]
for $t\in (t_{n-1},t_n]$ if $2\le n\le N$, and for $t\in [0,t_1]$ if $n=1$.  
By \eqref{5.1a}, we have 
\begin{equation}
\|\overline{\fb}_k\|_{{\cal V}^*} \le \|\fb\|_{{\cal V}^*}. 
\label{5.12a}
\end{equation}
Also, the following convergence holds (\cite{CG99}):
\begin{equation}
\overline{\fb}_k\to \fb \quad{\rm in}\ {\cal V}^*. 
\label{5.12b}
\end{equation}

Rewrite \eqref{S1} and \eqref{S2} as, a.e.\ $t\in (0,T)$,
\begin{align}
& \bu_k^\prime(t) + A\overline{\bu}_k(t)+\gamma_{\rm div}^*\overline{\bxi}_k(t)=\overline{\fb}_k(t), \label{S20a}\\[0.5mm]
& \overline{\bxi}_k(t)\in \partial\Psi(\gamma_{\rm div}\overline{\bu}_k(t)). \label{S20b}
\end{align}
Define Nemytskii operators $\cA\colon\cV_{\rm div}\to \cV^*_{\rm div}$ and $\tilde{\gamma}\colon\cV_{\rm div}\to \cS$ by 
\begin{align*}
(\cA\bv)(t) &=A\bv(t),\\
(\tilde{\gamma}\bv)(t) &=\gamma_{\rm div} \bv(t). 
\end{align*}
Then, $\cA\in {\cal L}(\cV_{\rm div};\cV_{\rm div}^*)$, $\tilde{\gamma}\in {\cal L}(\cV_{\rm div};\cS)$. We derive from \eqref{S20a}--\eqref{S20b} that for a.e.\ $t\in(0,T)$,
\begin{align*}
& (\bu_k^\prime(t),\bv)_{\boldsymbol H} +\langle A\overline{\bu}_k(t),\bv\rangle_{\bVVdiv}
+\langle \overline{\bxi}_k(t),\gamma_{\rm div}\bv\rangle_{{\boldsymbol S}^*\times {\boldsymbol S}}= \langle \overline{\fb}_k(t),\bv\rangle_{\bVVdiv}\quad \forall\,\bv\in \bV_{\rm div}, \\[0.5mm]
& \overline{\bxi}_k(t)\in \partial\Psi(\gamma_{\rm div}\overline{\bu}_k(t)),
\end{align*}
or
\begin{align}
& (\bu_k^\prime,\bv)_{\cal H} +\langle \cA\overline{\bu}_k,\bv\rangle_{\cVVdiv}
+\langle \overline{\bxi}_k,\tilde{\gamma}\bv\rangle_{{\cal S}^*\times {\cal S}}= \langle \overline{\fb}_k,\bv\rangle_{\cVVdiv}\quad \forall\,\bv\in \cV_{\rm div}, \label{S21a}\\[0.5mm]
& \overline{\bxi}_k(t)\in \partial\Psi(\gamma_{\rm div}\overline{\bu}_k(t)),\quad {\rm a.e.}\ t\in(0,T). \label{S21b}
\end{align}

Let us establish some bounds on $\{\overline{\bu}_k\}$, $\{\bu_k\}$, and $\{\overline{\bxi}_k\}$ next.

\begin{lemma}\label{lem:bd}
Under the assumptions \eqref{assump1}, $H(\psi)$ and \eqref{small}, there exists a constant $c>0$ such that
\begin{align}
& \|\overline{\bu}_k\|_{L^\infty(0,T;{\boldsymbol H})}\le c, \label{S22a}\\
& \|\overline{\bu}_k\|_{\cal V}\le c, \label{S22b}\\
& \|\overline{\bu}_k\|_{M^{2,2}(0,T;{\boldsymbol V},{\boldsymbol V}^*)}\le c, \label{S22c}\\
& \|\bu_k\|_{C([0,T];{\boldsymbol H})}\le c, \label{S23a}\\
& \|\bu_k\|_{\cal V}\le c, \label{S23b}\\
& \|\bu_k^\prime\|_{{\cal V}^*}\le c, \label{S23c}\\
& \|\overline{\bxi}_k\|_{{\cal S}^*}\le c. \label{S24}
\end{align}
\end{lemma}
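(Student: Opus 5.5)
Most of the bounds in Lemma \ref{lem:bd} are direct translations of the discrete estimates of Lemma \ref{lem:4.4} into the language of the interpolants $\overline{\bu}_k$, $\bu_k$, $\overline{\bxi}_k$. The exception is \eqref{S23c}, which uses the equation \eqref{S20a}. I would proceed in this order: \eqref{S22a}, \eqref{S22b}, \eqref{S24}, then \eqref{S23a}, \eqref{S23b}, then \eqref{S23c}, and finally \eqref{S22c}.

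\textbf{Piecewise constant interpolants.} Since $\overline{\bu}_k(t)=\bu^k_n$ on each subinterval, \eqref{S22a} is \eqref{C1}. For \eqref{S22b},
\[
\|\overline{\bu}_k\|_{\cal V}^2=k\sum_{n=1}^N\|\bu^k_n\|_{\boldsymbol V}^2,
\]
which is bounded by \eqref{C3}. For \eqref{S24}, square \eqref{S14a}, multiply by $k$ and sum:
\[
\|\overline{\bxi}_k\|_{{\cal S}^*}^2\le 2c^2\Big(T+k\sum_{n=1}^N\|\bu^k_n\|_{\boldsymbol V}^2\Big),
\]
which is bounded by \eqref{C3}.

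\textbf{Piecewise linear interpolant.} $\bu_k(t)$ is a convex combination of $\bu^k_{n-1}$ and $\bu^k_n$, so $\|\bu_k(t)\|_{\boldsymbol H}\le\max_n\|\bu^k_n\|_{\boldsymbol H}$, and \eqref{S23a} follows from \eqref{C1}. For \eqref{S23b},
\[
\|\bu_k\|_{\cal V}^2\le 2k\sum_{n=1}^N\big(\|\bu^k_n\|_{\boldsymbol V}^2+\|\bu^k_{n-1}\|_{\boldsymbol V}^2\big).
\]
The only extra term is $k\|\bu^k_0\|_{\boldsymbol V}^2=k\|\bu_{k,0}\|_{\boldsymbol V}^2\le c$, because the initial approximation was chosen with $\|\bu_{k,0}\|_{\boldsymbol V}\le c\,k^{-1/2}$. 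This is exactly where that choice is needed.

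\textbf{The derivative bound \eqref{S23c}.} Use \eqref{S20a}, which I read as an identity in $\cV_{\rm div}^*$:
\[
\bu_k'=\overline{\fb}_k-\cA\overline{\bu}_k-\tilde{\gamma}^*\overline{\bxi}_k .
\]
Each term is bounded in $\cV_{\rm div}^*$:
\begin{itemize}
\item $\overline{\fb}_k$ by \eqref{5.12a};
\item $\cA\overline{\bu}_k$ by $2\mu\|\overline{\bu}_k\|_{\cal V}$ and \eqref{S22b};
\item $\tilde{\gamma}^*\overline{\bxi}_k$ by $\lambda_\tau^{-1/2}\|\overline{\bxi}_k\|_{{\cal S}^*}$, using the trace inequality \eqref{trace} and \eqref{S24}.
\end{itemize}
One subtlety is that $\bu_k'$ takes values in $\bV_{\rm div}$, so the natural norm is in $\cV_{\rm div}^*$. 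I interpret ${\cal V}^*$ in \eqref{S23c} accordingly, as the dual of the divergence-free space used throughout this section.

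\textbf{The $M^{2,2}$ bound \eqref{S22c}, the main obstacle.} The $L^2(0,T;\bV)$ part is \eqref{S22b}, so what remains is the $BV^2(0,T;\bV^*)$ seminorm of a step function. For any partition, $\overline{\bu}_k(a_i)-\overline{\bu}_k(a_{i-1})$ equals a telescoping sum of jumps $\bu^k_n-\bu^k_{n-1}$ over the grid points crossed. Each step index appears in at most one group, since the groups correspond to disjoint intervals. Two routes to a bound:
\begin{itemize}
\item \emph{Via the jumps in $\bH$.} Bound $\|\cdot\|_{\boldsymbol V^*}\le c\|\cdot\|_{\boldsymbol H}$. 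The naive estimate of a grouped sum by the sum of squares loses a factor equal to the number of jumps per group. So rather than \eqref{C2} alone, I would express each grouped jump through the scheme \eqref{S1}.
\item \emph{Via the scheme.} By \eqref{S1}, $\bu^k_n-\bu^k_{n-1}=k\big(\fb^k_n-A\bu^k_n-\gamma_{\rm div}^*\bxi^k_n\big)$. A sum of such terms over $j$ consecutive steps is $\int_{a}^{b}\bu_k'\,dt$ over the union of those subintervals. By Cauchy--Schwarz its squared dual norm is at most $(b-a)\int_a^b\|\bu_k'\|^2_{\boldsymbol V^*_{\rm div}}dt$. Summing over the partition gives at most $T\|\bu_k'\|_{{\cal V}^*}^2$, which is bounded by \eqref{S23c}.
\end{itemize}
The grouped differences may cover a slightly larger union of cells than $[a_{i-1},a_i]$, but the covering intervals are disjoint, so the sum is still controlled by $T\|\bu_k'\|^2_{{\cal V}^*}$. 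This is the step that needs care. Again the dual norm is taken in $\bV_{\rm div}^*$, matching the triple $\bV_{\rm div}\subset\bH\subset\bV_{\rm div}^*$ needed later for Theorem \ref{th:lb}.
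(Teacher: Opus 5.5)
Your proposal is correct and follows essentially the paper's proof. The interpolant bounds are read off from \eqref{C1}, \eqref{C3} and \eqref{S14a}, the derivative bound comes from \eqref{S20a}, and the $BV^2$ seminorm is controlled by telescoping plus Cauchy--Schwarz: the paper's estimate $\|\bu^k_{m_i}-\bu^k_{m_{i-1}}\|^2\le(m_i-m_{i-1})\sum_j\|\bu^k_j-\bu^k_{j-1}\|^2$ is your $(b-a)\int_a^b\|\bu_k^\prime\|^2dt$ written discretely, and both give $T\|\bu_k^\prime\|^2$.

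You also make explicit two points the paper glosses over. First, the $n=0$ term in \eqref{S23b} does need $\|\bu_{k,0}\|_{\boldsymbol V}\le c\,k^{-1/2}$, since \eqref{C3} starts at $n=1$. Second, \eqref{S20a} holds only in $\bV^*_{\rm div}$, so both your argument and the paper's actually bound $\bu_k^\prime$ in $\cV^*_{\rm div}$ and the $BV^2$ seminorm in $\bV^*_{\rm div}$; this is what Theorem \ref{th:lb} needs later, with $\bV_{\rm div}$ in place of $\bV$.
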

\begin{proof}
The bound \eqref{S22a} is a direct consequence of \eqref{C1}.  The bound \eqref{S22b} follows from \eqref{C3}:
\[ \|\overline{\bu}_k\|_{\cal V}^2 = \sum_{n=1}^N \|\bu_n^k\|_{\boldsymbol V}^2 k \le c. \]

For any $t\in (t_{n-1},t_n]$ if $2\le n\le N$, or for any $t\in[0,t_1]$ if $n=1$, by \eqref{C1},
\[ \|\bu_k(t)\|_{\boldsymbol H} \le \max\left\{ \|\bu^k_n\|_{\boldsymbol H}, \|\bu^k_{n-1}\|_{\boldsymbol H}\right\}\le c. \]
Notice that $\bu_k(t)$ is continuous in $t$.  So \eqref{S23a} holds.

The bound \eqref{S23b} is established from
\[  \|\bu_k\|_{\cal V}^2=\int_0^T \|\bu_k(t)\|_{\boldsymbol V}^2 dt=\sum_{n=1}^N\int_{t_{n-1}}^{t_n}
\left\|\bu^k_n+\left(\frac{t}{k}-n\right)(\bu^k_n-\bu^k_{n-1})\right\|_{\boldsymbol V}^2dt
\le c \sum_{n=0}^N k\,\|\bu^k_n\|_{\boldsymbol V}^2 \]
which is uniformly bounded with respect to $k$ by \eqref{C3}. 

By \eqref{S14a},
\[ \|\overline{\bxi}_k\|_{{\boldsymbol S}^*}^2 =\sum_{n=1}^N k\,\|\bxi^k_n\|_{{\boldsymbol S}^*}^2 
\le c\,k \sum_{n=1}^N \left(1+\|\bu^k_n\|_{\boldsymbol V}^2\right)
\le c+c\,k \sum_{n=1}^N \|\bu^k_n\|_{\boldsymbol V}^2.\]
Then, applying \eqref{C3}, we derive \eqref{S24}.

From \eqref{S20a},
\[ \bu_k^\prime(t)= \overline{\fb}_k(t)-A\overline{\bu}_k(t)-\gamma_{\rm div}^*\overline{\bxi}_k(t)
\quad {\rm a.e.}\ t\in(0,T).\]
Then,
\[ \|\bu_k^\prime(t)\|_{{\boldsymbol V}^*}\le \|\overline{\fb}_k(t)\|_{{\boldsymbol V}^*} 
+ \|A\|\,\|\overline{\bu}_k(t)\|_{\boldsymbol V}
+ \|\gamma_{\rm div}^*\| \|\overline{\bxi}_k(t)\|_{{\boldsymbol S}^*}^2\quad {\rm a.e.}\ t\in(0,T).\]
Consequently, for some constant $\overline{c}>0$, 
\[\|\bu_k^\prime\|_{{\cal V}^*}\le \overline{c} \left[ \|\overline{\fb}_k\|_{{\cal V}^*}
+\|\overline{\bu}_k\|_{\cal V}+\|\overline{\bxi}_k\|_{{\cal S}^*}\right],\]
which is bounded by a constant $c$ due to \eqref{5.12a}, \eqref{S22b} and \eqref{S24}.  Thus, \eqref{S23c} holds.

Finally, we derive the bound \eqref{S22c}.  By definition,
\[ \|\overline{\bu}_k\|_{BV^2(0,T;{\boldsymbol V}^*)}^2=\sup_{0=s_0<s_1<\cdots<s_{i_0}=T} 
\sum_{i=1}^{i_0} \|\overline{\bu}_k(s_i)-\overline{\bu}_k(s_{i-1})\|_{{\boldsymbol V}^*}^2, \]
where the supremum is taken with respect to all partitions of $[0,T]$.  For a particular partition
$0=s_0<s_1<\cdots<s_{i_0}=T$, we have positive integers $m_1\le m_2\le\cdots\le m_{i_0}=N$ such that
$s_i\in \left((m_i-1)\,k,m_i k\right]$, $1\le i\le i_0$.  Then,
\[ \overline{\bu}_k(s_i)-\overline{\bu}_k(s_{i-1})=\bu^k_{m_i}-\bu^k_{m_{i-1}}. \]
By the triangle inequality,
\[ \|\overline{\bu}_k(s_i)-\overline{\bu}_k(s_{i-1})\|_{{\boldsymbol V}^*}=\|\bu^k_{m_i}-\bu^k_{m_{i-1}}\|_{{\boldsymbol V}^*}\le\sum_{j=m_{i-1}+1}^{m_i}\|\bu^k_j-\bu^k_{j-1}\|_{{\boldsymbol V}^*}\]
and so,
\[ \|\overline{\bu}_k(s_i)-\overline{\bu}_k(s_{i-1})\|_{{\boldsymbol V}^*}^2
\le \left(m_i-m_{i-1}\right) \sum_{j=m_{i-1}+1}^{m_i}\|\bu^k_j-\bu^k_{j-1}\|_{{\boldsymbol V}^*}^2
\le \left(m_i-m_{i-1}\right) \sum_{j=1}^N\|\bu^k_j-\bu^k_{j-1}\|_{{\boldsymbol V}^*}^2.\]
As a result,
\begin{align*}
\sum_{i=1}^{i_0} \|\overline{\bu}_k(s_i)-\overline{\bu}_k(s_{i-1})\|_{{\boldsymbol V}^*}^2
& \le \sum_{i=1}^{i_0}\left(m_i-m_{i-1}\right)\sum_{j=1}^N\|\bu^k_j-\bu^k_{j-1}\|_{{\boldsymbol V}^*}^2\\
& = N \sum_{j=1}^N\|\bu^k_j-\bu^k_{j-1}\|_{{\boldsymbol V}^*}^2\\
& =T\,k\sum_{j=1}^N\|\bu^k_j-\bu^k_{j-1}\|_{{\boldsymbol V}^*}^2\\
&=T\,\|\bu_k^\prime\|_{{\cal V}^*}^2
\end{align*}
which is uniformly bounded with respect to $k$ by \eqref{S23c}.  Thus, 
\[ \|\overline{\bu}_k\|_{M^{2,2}(0,T;{\boldsymbol V},{\boldsymbol V}^*)}
=\|\overline{\bu}_k\|_{\cal V}+\|\overline{\bu}_k\|_{BV^2(0,T;{\boldsymbol V}^*)}\le c,\]
i.e., \eqref{S22c} holds. \hfill
\end{proof}

\begin{lemma}\label{lem:main}
Assume \eqref{assump1}, $H(\psi)$ and \eqref{small}. For a subsequence $\{k\}$, we have, as $k\to 0$,
\begin{align}
& \overline{\bu}_k \rightharpoonup \bu\quad{\rm in}\ \cV_{\rm div}, \label{S25}\\
& \overline{\bu}_k \to \bu\quad{\rm in}\ \cH, \label{S26}\\
& \overline{\bu}_{k,\tau} \to \bu_\tau\quad{\rm in}\ \cS \label{S27}\\
& \bu_k \rightharpoonup \bu\quad{\rm in}\ \cV_{\rm div}, \label{S28}\\
& \bu^\prime_k \rightharpoonup \bu^\prime\quad{\rm in}\ \cV^*, \label{S29}\\
& \overline{\bxi}_k\rightharpoonup \bxi\quad{\rm in}\ \cS^*, \label{S30}
\end{align}
and $(\bu,\bxi)$ solves Problem \ref{p3.10div}.
\end{lemma}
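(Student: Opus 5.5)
We combine the uniform bounds of Lemma \ref{lem:bd}, the compactness result Theorem \ref{th:lb}, and the Aubin--Cellina theorem (Theorem \ref{th:sv}). First, by reflexivity and \eqref{S22b}, \eqref{S23b}, \eqref{S23c}, \eqref{S24}, we extract a subsequence with $\overline{\bu}_k\weak\bu$ and $\bu_k\weak\tilde{\bu}$ in $\cV_{\rm div}$, $\bu_k^\prime\weak\bw$ in $\cV_{\rm div}^*$ (and in $\cV^*$ after restricting to the relevant pairing), and $\overline{\bxi}_k\weak\bxi$ in $\cS^*$. We identify $\bw=\tilde\bu^\prime$ by passing to the limit in $\int_0^T(\bu_k,\phi^\prime)\,dt=-\int_0^T\langle\bu_k^\prime,\phi\rangle dt$ for smooth compactly supported $\phi$. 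To show $\tilde\bu=\bu$, we note that $\bu_k-\overline{\bu}_k$ equals $(t/k-n)(\bu^k_n-\bu^k_{n-1})$ on each subinterval. Hence
\[\|\bu_k-\overline{\bu}_k\|_{\cal H}^2\le k\sum_{n=1}^N\|\bu^k_n-\bu^k_{n-1}\|_{\boldsymbol H}^2\le c\,k\]
by \eqref{C2}, so the two weak limits coincide. Together with the identification of the derivative, this gives \eqref{S25}, \eqref{S28} and \eqref{S29}, and in particular $\bu\in\cW_{\rm div}$.

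For the strong convergences we apply Theorem \ref{th:lb} with $p=q=2$ and $V_1=\bV$, $V_2=\bH$ or a suitable intermediate space, and $V_3=\bV^*$, using the $M^{2,2}$ bound \eqref{S22c}. For $\bH$ we use the compactness of $H^1\subset L^2$. For \eqref{S27} we take $V_2=H^{1-\delta}(\Omega)^d$ with $\delta\in(0,1/2)$: the embedding $\bV\subset V_2$ is compact, and the tangential trace is continuous from $H^{1-\delta}$ into $\bS$. Alternatively, we use the Ehrling-type inequality $\|\gamma\bv\|_{\bS}\le\epsilon\|\bv\|_{\boldsymbol V}+c_\epsilon\|\bv\|_{\boldsymbol H}$, which follows from compactness of $\gamma$, combined with the $\cV$ bound and \eqref{S26}. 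By uniqueness of limits, the strong limit is $\bu$. Passing to a further subsequence, we get $\gamma_{\rm div}\overline{\bu}_k(t)\to\gamma_{\rm div}\bu(t)$ in $\bS$ for a.e.\ $t$.

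Next we pass to the limit in \eqref{S21a}. For fixed $\bv\in\cV_{\rm div}$, each term converges: the $\bu_k^\prime$ term by \eqref{S29} (the $\cH$-pairing is the duality pairing), the $\cA$ term by \eqref{S25}, the $\overline{\bxi}_k$ term by \eqref{S30}, and the right side by \eqref{5.12b}. This yields \eqref{3.54} in $\cV_{\rm div}^*$, hence a.e.\ in $t$ in $\bV^*_{\rm div}$. For the inclusion \eqref{3.55} we invoke Theorem \ref{th:sv} with $X=\bS$, $Y=\bS^*$, $F=\partial\Psi$, $x_k=\gamma_{\rm div}\overline{\bu}_k$ and $y_k=\overline{\bxi}_k$; weak convergence in $\cS^*$ implies weak convergence in $L^1(0,T;\bS^*)$. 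The hypotheses on $F$ are that $\partial\Psi(\bz)$ is nonempty, closed and convex, and that $\partial\Psi$ is upper semicontinuous from $\bS$ into $\bS^*_w$. These are standard consequences of $H(\psi)$: $\Psi$ is locally Lipschitz on $\bS$ by the growth condition (iii), and the graph of $\partial\Psi$ is closed in strong$\times$weak topologies with bounded values (cf.\ \cite{MOS2013}). \emph{The main obstacle is likely here}, in verifying this upper semicontinuity. We would argue via the closed graph together with local boundedness of $\partial\Psi$ on bounded sets, using that $\Psi^0$ is upper semicontinuous in its first argument.

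Finally, we verify the initial condition $\bu(0)=\bu_0$. Since $\bu_k\weak\bu$ in $\cW_{\rm div}$ (bounded), and $\cW_{\rm div}\subset C([0,T];\bH)$ continuously, the map $\bv\mapsto\bv(0)$ is linear and continuous from $\cW_{\rm div}$ to $\bH$. Hence $\bu_k(0)\weak\bu(0)$ in $\bH$. But $\bu_k(0)=\bu_{k,0}\to\bu_0$ in $\bH$, so $\bu(0)=\bu_0$, and therefore $(\bu,\bxi)$ solves Problem \ref{p3.10div}.
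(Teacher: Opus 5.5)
Your proposal is correct and follows essentially the same route as the paper. It takes weak limits from the bounds of Lemma~\ref{lem:bd}, identifies the limits and $\bu^\prime$, passes to the limit in \eqref{S21a}, and obtains strong convergence of the traces via Theorem~\ref{th:lb} in an intermediate space $H^s(\Omega)^d$ with $s\in(1/2,1)$. It then applies the Aubin--Cellina theorem for the inclusion and uses weak continuity of $\bv\mapsto\bv(0)$ for the initial condition. The differences are cosmetic: you get $\overline{\bu}_k-\bu_k\to\bzero$ in $\cH$ from \eqref{C2}, whereas the paper uses $\|\overline{\bu}_k-\bu_k\|_{\cV^*}^2=\frac{k^2}{3}\|\bu_k^\prime\|_{\cV^*}^2$, and you sketch the upper semicontinuity of $\partial\Psi$ that the paper simply cites.
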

\begin{proof}
From the bounds in Lemma \ref{lem:bd}, we can assume that, passing to a subsequence if necessary, there exist
$\overline{\bu}\in\mathcal{V}_{\rm div}\cap L^\infty(0,T;\bH)$, $\bu\in\mathcal{V}_{\rm div}\cap L^\infty(0,T;\bH)$,
$\tilde{\bu}\in\mathcal{V}^*$, and $\bxi\in\mathcal{S}^*$ such that as $k\to 0$,
\begin{align}
& \overline{\bu}_k\weak \overline{\bu}\ {\rm in}\ \mathcal{V}\ {\rm and}\
\overline{\bu}_k\rightharpoonup^* \overline{\bu}\ {\rm in}\  L^\infty(0,T;\bH),\label{hvcon01}\\
& \bu_k\weak \bu \ {\rm in}\ \mathcal{V}\ {\rm and}\
 \bu_k\rightharpoonup^* \bu\ {\rm in}\ L^\infty(0,T;\bH),\label{hvcon02}\\
& \bu_k'\weak \tilde{\bu}\ {\rm in}\ \mathcal{V}^*,\label{hvcon03}\\
& \overline{\bxi}_k\weak \bxi\ {\rm in}\ \mathcal{S}^*.\label{hvcon04}
\end{align}
First we show that $\overline{\bu}=\bu$. From
\[ \|\overline{\bu}_k-\bu_k\|_{\mathcal{V}^*}^2=\sum_{i=1}^N\int_{(i-1)k}^{ik}(ik-t)^2 
\Big\|\frac{\bu_{k,i}-\bu_{k,i-1}}{k}\Big\|_{{\boldsymbol V}^*}^2dt 
= \frac{k^2}{3}\|\bu_k'\|_{\mathcal{V}^*}^2, \]
we know that $\overline{\bu}_k-\bu_k\rightarrow 0$ in $\mathcal{V}^*$ as $k\rightarrow0$. On the other hand, 
by \eqref{hvcon01} and \eqref{hvcon02}, $\overline{\bu}_k-\bu_k\weak \overline{\bu}-\bu$ in $\mathcal{V}$.
Since the embedding $\mathcal{V}\subset \mathcal{V}^*$ is continuous, we also have
$\overline{\bu}_k-\bu_k\weak \overline{\bu}-\bu$ in $\mathcal{V}^*$. Hence,
$\overline{\bu}-\bu=0$, i.e.\ $\overline{\bu}=\bu$. Since $\bu_k\weak \bu$ in $\mathcal{V}$
and $\bu_k'\weak \tilde{\bu}$ in $\mathcal{V}^*$, we conclude (cf.\ \cite[Proposition 23.19]{ZeiIIA})
that $\tilde{\bu}=\bu'$.  Thus, for all $\bv\in \mathcal{V}_{\rm div}$, we obtain
\begin{equation}
(\bu_k',\bv)_{\mathcal{H}}=\langle \bu_k',\bv\rangle_{\cVVdiv}\rightarrow \langle \bu',\bv\rangle_{\cVVdiv}.
\label{hvsov01}
\end{equation}
Since $\mathcal{A}\colon \mathcal{V}_{\rm div}\to \mathcal{V}^*_{\rm div}$ is linear and continuous, 
it is weakly continuous. Thus, $\overline{\bu}_k\weak \bu$ in $\mathcal{V}_{\rm div}$ implies that
\begin{equation}
\langle \mathcal{A}\overline{\bu}_k,\bv\rangle_{\cVVdiv}\rightarrow\langle \mathcal{A}\bu,\bv\rangle_{\cVVdiv}.\label{hvsov02}
\end{equation}
From \eqref{hvcon04} we get
\begin{equation}
\langle\overline{\bxi}_k,\tilde{\gamma}\bv\rangle_{\mathcal{S}^*\times\mathcal{S}}\rightarrow
\langle \bxi,\tilde{\gamma}\bv\rangle_{\mathcal{S}^*\times\mathcal{S}}.\label{hvsov04}
\end{equation}
It follows from \eqref{5.12b} that $\overline{\fb}_k\rightarrow \fb$ in $\mathcal{V}^*$.  Thus,
\begin{equation}
\langle \fb_k,\bv\rangle_{\cVVdiv}\rightarrow\langle \fb,\bv\rangle_{\cVVdiv}.\label{hvsov05}
\end{equation}
Using \eqref{hvsov01}--\eqref{hvsov05}, we can pass to the limit $k\to 0$ in \eqref{S21a} and obtain
\begin{equation}
\langle \bu',\bv\rangle_{\cVVdiv}+\langle \mathcal{A}\bu,\bv\rangle_{\cVVdiv}
+\langle \bxi,\tilde{\gamma} \bv\rangle_{\mathcal{S}^*\times\mathcal{S}}
= \langle \fb,\bv\rangle_{\cVVdiv}\quad\forall\,\bv\in \mathcal{V}_{\rm div}.
\label{hvsov06}
\end{equation}
From this, we deduce \eqref{3.54} in $\bV^*_{\rm div}$ for a.e.\ $t\in (0,T)$.  

Recall the bound \eqref{S22c}. Applying Theorem \ref{th:lb} and Sobolev embedding theorems, we know that, 
up to a further subsequence if necessary, $\overline{\bu}_k\to \bu$ in $L^2(0,T;H^\delta(\Omega)^d)$ 
for some number $\delta\in (1/2,1)$.  Since $\gamma\colon H^\delta(\Omega)^d \to\bS$ is compact, 
for a further subsequence if needed, we have 
$\gamma_{\rm div}\overline{\bu}_k(t)\rightarrow\gamma_{\rm div}\bu(t)$ in $\bS$ for a.e.\ $t\in (0,T)$. 
Since $\partial \psi: \bS\rightarrow 2^{{\boldsymbol S}^*}$ has nonempty, closed and convex values and is 
upper semicontinuous from $\bS$ endowed with strong topology into $\bS^*$ endowed with weak topology
(cf.\ \cite[Proposition 5.6.10]{DMP20031}), we deduce from \eqref{hvcon04}, \eqref{S21b} and Theorem \ref{th:sv} that
\begin{equation}
\bxi(t)\in\partial \psi(\gamma_{\rm div} \bu(t))\quad{\rm a.e.}\ t\in (0,T).\label{hvsov07}
\end{equation}

Finally, we pass to the limit with the initial conditions on the function $\bu_k$.
Since $\bu_k\weak \bu$ in $\mathcal{V}$, $\bu_k'\weak \bu'$ in $\mathcal{V}^*$, we have 
$\bu_k(t)\weak \bu(t)$ in $\bH$ for all $t\in [0,T]$ (cf.\ \cite[Lemma 4\,(b)]{MO09}).  Therefore,
$\bu_{k,0}=\bu_k(0)\weak \bu(0)$ in $\bH$. Since $\bu_{k,0}\rightarrow \bu_0$ in $\bH$, we have $\bu(0)=\bu_0$.  

In conclusion, $(\bu,\bxi)$ is a solution of Problem \ref{p3.10div}.  \hfill
\end{proof}

Summarizing, we have the next two results.

\begin{theorem}\label{thm:main}
Assume \eqref{assump1}, $H(\psi)$ and \eqref{small}. Then, Problem \ref{p3.10div} has a solution $(\bu,\bxi)\in \cW_{\rm div}\times \cS^*$ and the solution component $\bu$ is unique.
\end{theorem}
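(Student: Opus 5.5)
Existence is already given by Lemma \ref{lem:main}: along a subsequence $k\to0$ the Rothe approximations converge to a pair $(\bu,\bxi)$ that solves Problem \ref{p3.10div}. We only need to confirm the regularity $\bu\in\cW_{\rm div}$, $\bxi\in\cS^*$. From \eqref{hvsov06} the identity $\bu'=\fb-\cA\bu-\tilde\gamma^*\bxi$ holds in $\cV_{\rm div}^*$. The right-hand side lies in $\cV_{\rm div}^*$ because $\fb\in\cV^*$, $\cA\bu\in\cV_{\rm div}^*$ and $\bxi\in\cS^*$. Hence $\bu\in\cW_{\rm div}$, and the existence part is done.

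The main work is uniqueness of $\bu$. Let $(\bu_1,\bxi_1)$ and $(\bu_2,\bxi_2)$ be two solutions and set $\bw=\bu_1-\bu_2\in\cW_{\rm div}$. Subtract the two versions of \eqref{3.54} and test with $\bw(t)\in\bV_{\rm div}$ for a.e.\ $t$. This gives
\[ \langle\bw'(t),\bw(t)\rangle_{\bVVdiv}+2\mu\|\bw(t)\|_{\boldsymbol V}^2+\langle\bxi_1(t)-\bxi_2(t),\gamma_{\rm div}\bw(t)\rangle_{{\boldsymbol S}^*\times{\boldsymbol S}}=0. \]
For the subdifferential term we use \eqref{3.55} together with the definition of the Clarke subdifferential:
\[ \langle\bxi_1,\gamma\bw\rangle\ge-\Psi^0(\gamma\bu_1;\gamma\bu_2-\gamma\bu_1),\qquad -\langle\bxi_2,\gamma\bw\rangle\ge-\Psi^0(\gamma\bu_2;\gamma\bu_1-\gamma\bu_2). \]
Adding these and applying \eqref{3.27a} bounds the term below by $-\alpha_\psi\lambda_\tau^{-1}\|\bw(t)\|_{\boldsymbol V}^2$.

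The step needing some care is the integration-by-parts formula in $\cW_{\rm div}$: we need $\langle\bw',\bw\rangle=\frac12\frac{d}{dt}\|\bw\|_{\boldsymbol H}^2$. This holds because $\bV_{\rm div}\subset\overline{\bV_{\rm div}}^{\boldsymbol H}\subset\bV_{\rm div}^*$ is a Gelfand triple, so $\cW_{\rm div}\subset C([0,T];\bH)$ with the standard formula (cf.\ \cite[Proposition 23.23]{ZeiIIA}). One must check that the duality in $\cW_{\rm div}$ is consistent with the $\bH$ inner product, which follows from \eqref{4.3a}. Integrating from $0$ to $t$ and using $\bw(0)=\bzero$ then yields
\[ \tfrac12\|\bw(t)\|_{\boldsymbol H}^2+m\int_0^t\|\bw(s)\|_{\boldsymbol V}^2ds\le0, \]
with $m>0$ from \eqref{small}. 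Therefore $\bw=\bzero$, which shows that $\bu$ is unique.

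Two remarks complete the picture. First, $\bxi$ itself is in general not unique, only $\gamma_{\rm div}^*\bxi$ is, as in the remark following Theorem \ref{thm:dis1}; this is why the theorem asserts uniqueness of the component $\bu$ only. Second, once $\bu$ is unique, the standard subsequence argument shows that the whole family $\{\bu_k\}$ (not just a subsequence) converges to $\bu$ in the senses of Lemma \ref{lem:main}.
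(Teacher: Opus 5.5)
Your proposal is correct and follows the paper's proof. Existence is taken from Lemma~\ref{lem:main}. Uniqueness of $\bu$ comes from testing the difference of the two equations \eqref{3.54} with $\bu_1(t)-\bu_2(t)$, bounding the $\bxi$-term from below via the definition of $\partial\Psi$ and \eqref{3.27a}, and integrating from $0$ using $\bu_1(0)=\bu_2(0)$. Your explicit subdifferential inequalities and the Gelfand-triple justification of the energy identity fill in steps the paper leaves implicit.

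One caveat, shared with the paper: that same triple gives $\bu(0)\in\overline{\bV_{\rm div}}^{\boldsymbol H}$, and Problem~\ref{p3.10divk} already needs $\bu_{k,0}\in\bV_{\rm div}$. So the existence part tacitly requires $\bu_0\in\overline{\bV_{\rm div}}^{\boldsymbol H}$, which is stronger than \eqref{assump1}, unless the initial condition is understood only in $\bV_{\rm div}^*$.
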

\begin{proof}
Existence of a solution to Problem \ref{p3.10div} follows from Lemma \ref{lem:main}. 

Let us show the uniqueness of the solution component $\bu$.  Suppose both $(\bu_1,\bxi_1),(\bu_2,\bxi_2)
\in\cW_{\rm div}\times\cS^*$ are solutions of Problem \ref{p3.10div}.  Then for a.e.\ $t\in(0,T)$,
\begin{align}
& \langle \bu_1^\prime(t),\bv\rangle_{\bVVdiv}+a(\bu_1(t),\bv)+\langle\bxi_1(t),\gamma_{\rm div}\bv\rangle_{{\boldsymbol S}^*\times {\boldsymbol S}}
=\langle \fb(t),\bv\rangle_{\bVVdiv}\quad \forall\,\bv\in \bV_{\rm div}, \label{4.51}\\[0.5mm]
& \bxi_1(t)\in \partial\Psi(\gamma_{\rm div}\bu_1(t)), \label{4.52}\\[0.2mm]
& \langle \bu_2^\prime(t),\bv\rangle_{\bVVdiv}+a(\bu_2(t),\bv)+\langle\bxi_2(t),\gamma_{\rm div}\bv\rangle_{{\boldsymbol S}^*\times {\boldsymbol S}}
=\langle \fb(t),\bv\rangle_{\bVVdiv}\quad \forall\,\bv\in \bV_{\rm div}, \label{4.53}\\[0.5mm]
& \bxi_2(t)\in \partial\Psi(\gamma_{\rm div}\bu_2(t)), \label{4.54}
\end{align}
and 
\begin{equation}
\bu_1(0)=\bu_2(0)=\bu_0.  \label{4.55}
\end{equation}
Take $\bv=\bu_2(t)-\bu_1(t)$ in \eqref{4.51}, $\bv=\bu_1(t)-\bu_2(t)$ in \eqref{4.53}, and add the two 
inequalities to obtain
\begin{align}
& \frac{1}{2}\,\frac{d}{dt}\|\bu_1(t)-\bu_2(t)\|_{\boldsymbol H}^2+a(\bu_1(t)-\bu_2(t),\bu_1(t)-\bu_2(t))\nonumber\\
&{}\qquad +\langle \bxi_1(t)-\bxi_2(t),\gamma_{\rm div}\bu_1(t)-\gamma_{\rm div}\bu_2(t)\rangle_{{\boldsymbol S}^*\times {\boldsymbol S}}= 0
\label{5.43a}
\end{align}
for a.e.\ $t\in (0,T)$.  Then, with $m=2\,\mu-\alpha_\psi\lambda_\tau^{-1}$ defined in \eqref{small},
\begin{equation}
\frac{1}{2}\,\frac{d}{dt}\|\bu_1(t)-\bu_2(t)\|_{\boldsymbol H}^2+m\,\|\bu_1(t)-\bu_2(t)\|_{\boldsymbol V}^2\le 0,
\quad {\rm a.e.}\ t\in (0,T).
\label{5.43b}
\end{equation}
Integrate the inequality from $0$ to $t\in(0,T)$ to obtain
\[ \frac{1}{2}\,\|\bu_1(t)-\bu_2(t)\|_{\boldsymbol H}^2+
m\int_0^t\|\bu_1(s)-\bu_2(s)\|_{\boldsymbol V}^2ds\le 0,\quad t\in (0,T). \]
Since $m>0$ by assumption, we deduce from the above inequality that
\[ \bu_1(t)-\bu_2(t)=\bzero\ {\rm for}\ t\in(0,T).\]
Therefore, we have the uniqueness of $\bu$. \hfill
\end{proof}

\begin{theorem}\label{thm:main1}
Assume \eqref{assump1}, $H(\psi)$ and \eqref{small}. Then, Problem \ref{p3.5div} has a unique solution $\bu\in \cW_{\rm div}$.
\end{theorem}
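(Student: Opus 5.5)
Existence follows from Theorem \ref{thm:main} together with the remark after Problem \ref{p3.10div}; uniqueness is proved directly for Problem \ref{p3.5div}, since it cannot be read off from Problem \ref{p3.10div}.

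\emph{Existence.} By Theorem \ref{thm:main}, Problem \ref{p3.10div} has a solution $(\bu,\bxi)\in\cW_{\rm div}\times\cS^*$. As claimed after Problem \ref{p3.10div}, $\bu$ then solves Problem \ref{p3.5div}. To check this, fix a.e.\ $t$ and test \eqref{3.54} with $\bv\in\bV_{\rm div}$. Since $\bxi(t)\in\partial\Psi(\gamma_{\rm div}\bu(t))$, the definition of the Clarke subdifferential gives
\[ \langle\bxi(t),\gamma_{\rm div}\bv\rangle_{{\boldsymbol S}^*\times{\boldsymbol S}}\le\Psi^0(\gamma_{\rm div}\bu(t);\gamma_{\rm div}\bv). \]
Substituting this into \eqref{3.54} yields \eqref{3.51}. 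The initial condition \eqref{3.51a} is exactly \eqref{3.54a}.

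\emph{Uniqueness.} Let $\bu_1,\bu_2\in\cW_{\rm div}$ both solve Problem \ref{p3.5div}. For a.e.\ $t$, take $\bv=\bu_2(t)-\bu_1(t)$ in \eqref{3.51} for $\bu_1$ and $\bv=\bu_1(t)-\bu_2(t)$ in \eqref{3.51} for $\bu_2$, then add the two inequalities. The source terms cancel, which leaves
\[ \langle \bu_1'(t)-\bu_2'(t),\bu_1(t)-\bu_2(t)\rangle+a(\bu_1(t)-\bu_2(t),\bu_1(t)-\bu_2(t))\le \Psi^0(\gamma\bu_1;\gamma\bu_2-\gamma\bu_1)+\Psi^0(\gamma\bu_2;\gamma\bu_1-\gamma\bu_2), \]
with all arguments evaluated at $t$. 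By \eqref{3.27a}, the right-hand side is at most $\alpha_\psi\lambda_\tau^{-1}\|\bu_1(t)-\bu_2(t)\|_{\boldsymbol V}^2$; this applies because $\gamma_{\rm div}$ is the restriction of $\gamma$.

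Next we use the integration-by-parts formula for $\cW_{\rm div}$, i.e.\ the Gelfand triple $\bV_{\rm div}\subset\bH_{\rm div}\subset\bV_{\rm div}^*$, where $\bH_{\rm div}$ is the closure of $\bV_{\rm div}$ in $\bH$. It gives
\[ \langle \bw'(t),\bw(t)\rangle=\tfrac12\,\tfrac{d}{dt}\|\bw(t)\|_{\boldsymbol H}^2 \quad\text{for } \bw=\bu_1-\bu_2. \]
Hence $\frac12\frac{d}{dt}\|\bw\|_{\boldsymbol H}^2+m\|\bw\|_{\boldsymbol V}^2\le 0$ with $m>0$ from \eqref{small}. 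Integrating from $0$ and using $\bw(0)=\bzero$ gives $\bw\equiv\bzero$.

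\emph{Main obstacle.} The key point is that uniqueness for Problem \ref{p3.5div} has to be shown independently; it does not follow from the uniqueness of $\bu$ in Theorem \ref{thm:main}. A priori, Problem \ref{p3.5div} might have solutions that do not come from any selection $\bxi$. The argument above removes this concern, because it works directly with the inequality through \eqref{3.27a} and never uses a selection.

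The one technical point to verify is the integration-by-parts identity in $\cW_{\rm div}$. Here $\bu'$ lies in $\cV_{\rm div}^*$, not $\cV^*$, so the formula must be applied in the divergence-free Gelfand triple $\bV_{\rm div}\subset\bH_{\rm div}\subset\bV_{\rm div}^*$; this is standard. This also ensures that $\cW_{\rm div}\subset C([0,T];\bH_{\rm div})$, so the initial condition makes sense.
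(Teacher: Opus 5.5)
Your proposal is correct and follows the paper's proof. Existence comes from Theorem \ref{thm:main} together with the remark after Problem \ref{p3.10div}. Uniqueness comes from testing with $\pm(\bu_1(t)-\bu_2(t))$, applying \eqref{3.27a}, and integrating $\frac{1}{2}\frac{d}{dt}\|\bu_1-\bu_2\|_{\boldsymbol H}^2+m\|\bu_1-\bu_2\|_{\boldsymbol V}^2\le 0$. Your explicit use of the Gelfand triple $\bV_{\rm div}\subset\bH_{\rm div}\subset\bV_{\rm div}^*$ to justify the energy identity in $\cW_{\rm div}$ supplies a detail the paper leaves implicit.
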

\begin{proof}
Theorem \ref{thm:main} guarantees the existence of a solution $\bu\in \cW_{\rm div}$ for Problem \ref{p3.5div}.

For the uniqueness, assume $\bu_1,\bu_2\in \cW_{\rm div}$ are two solutions of Problem \ref{p3.5div}.
Then, similar to \eqref{5.43a}, we have 
\begin{align*}
& \frac{1}{2}\,\frac{d}{dt}\|\bu_1(t)-\bu_2(t)\|_{\boldsymbol H}^2+a(\bu_1(t)-\bu_2(t),\bu_1(t)-\bu_2(t))\\
&{}\qquad \le \Psi^0(\gamma_{\rm div}\bu_1(t);\gamma_{\rm div}\bu_2(t)-\gamma_{\rm div}\bu_1(t))
+ \Psi^0(\gamma_{\rm div}\bu_2(t);\gamma_{\rm div}\bu_1(t)-\gamma_{\rm div}\bu_2(t))
\end{align*}
for a.e.\ $t\in (0,T)$.  We again have the inequality \eqref{5.43b} from which, the uniqueness follows. \hfill
\end{proof}

\section{Existence and uniqueness of the pressure field}\label{sec:p}

We have already shown in Theorem \ref{thm:main1} that Problem \ref{p3.5div} has a unique solution 
$\bu\in \cW_{\rm div}$.  Since
\[ \Psi^0(\gamma_{\rm div}\bu(t);\gamma_{\rm div}\bv)
\le I_{\Gamma_S}(\psi^0(\gamma_{\rm div}\bu(t);\gamma_{\rm div}\bv)),\]
we apply Theorem \ref{thm:main1} to know that there is an element $\bu\in \cW_{\rm div}$ such that for 
a.e.\ $t\in (0,T)$,
\[ \langle \bu^\prime(t),\bv\rangle_{\bVVdiv}+a(\bu(t),\bv)
+I_{\Gamma_S}(\psi^0(\gamma_{\rm div}\bu(t);\gamma_{\rm div}\bv))
\ge \langle \fb(t),\bv\rangle_{\bVVdiv}\quad \forall\,\bv\in \bV_{\rm div}.\]
Moreover, similar to the proof of Theorem \ref{thm:main1}, we can show that the element $\bu\in \cW_{\rm div}$ is unique.

From Lemma \ref{lem:main}, we know that $\bu^\prime\in \cV^*$.
Moreover, $\cA \bu$ is an element in $\cV^*$ since $\bu\in \cV$.  Also, for a.e.\ $t\in [0,T]$,
$\gamma^*_{\rm div}\bxi(t)$ can be viewed an element in $\bV^*$ through the formula
\[ \langle \gamma^*_{\rm div}\bxi(t), \bv\rangle=\langle \bxi(t), \gamma\bv\rangle\quad\forall\,\bv\in \bV. \]
In addition, by assumption \eqref{assump1}, $\fb(t)\in \bV^*$ for a.e.\ $t\in (0,T)$.  Therefore,
there exists a unique solution $\bu\in \cW_{\rm div}$ to the following problem.

\begin{problem}\label{p3.5div1}
Find $\bu\in \cW_{\rm div}$ such that for a.e.\ $t\in (0,T)$,
\begin{align}
\langle \bu^\prime(t),\bv\rangle_{\bVV}+a(\bu(t),\bv)+I_{\Gamma_S}(\psi^0(\gamma\bu(t);\gamma\bv))
\ge \langle \fb(t),\bv\rangle_{\bVV}\quad \forall\,\bv\in \bV_{\rm div}, \label{5.51}
\end{align}
and 
\begin{equation}
\bu(0)=\bu_0.  \label{5.51a}
\end{equation}
\end{problem}

We deduce from \eqref{5.51} that
\begin{equation}
\langle \bu^\prime(t),\bv\rangle_{\bVV}+a(\bu(t),\bv)= \langle \fb(t),\bv\rangle_{\bVV}\quad 
\forall\,\bv\in \bV_{\rm 0, div}:=\bV_0\cap \bV_{\rm div}. \label{5.52}
\end{equation}
Thanks to the inf-sup property \eqref{inf-sup}, there exists a function $p(t)\in Q$ such that
\begin{equation}
\langle \bu^\prime(t),\bv\rangle_{\bVV}+a(\bu(t),\bv)-b(\bv,p(t))= \langle \fb(t),\bv\rangle_{\bVV}
\quad \forall\,\bv\in \bV_0. \label{5.53}
\end{equation}
Thus,
\[ b(\bv,p(t))=\langle \bu^\prime(t),\bv\rangle_{\bVV}+a(\bu(t),\bv)-\langle \fb(t),\bv\rangle_{\bVV}
\quad \forall\,\bv\in \bV_0,\]
leading to
\[ b(\bv,p(t))\le \left(\|\bu^\prime(t)\|_{\boldsymbol V^*}+2\,\mu\,\|\bu(t)\|_{\boldsymbol V}
+\|\fb(t)\|_{\boldsymbol V^*}\right)\|\bv\|_{\boldsymbol V}\quad \forall\,\bv\in \bV_0.\]
Hence, with an application of the inf-sup property \eqref{inf-sup},
\[ \alpha_b \|p(t)\|_Q\le \|\bu^\prime(t)\|_{\boldsymbol V^*}+2\,\mu\,\|\bu(t)\|_{\boldsymbol V}
+\|\fb(t)\|_{\boldsymbol V^*}. \]
Consequently,
\[ p\in \cQ.\]

For an arbitrary $\bv\in \bV$, by the inf-sup property, there exists a function $\bv_1\in\bV_0$ such that
\begin{equation}
b(\bv_1,q)=b(\bv,q)\quad\forall\,q\in Q. \label{5.54}
\end{equation}
Define 
\[ \bv_2=\bv-\bv_1. \]
Then $\bv_2\in\bV$ and 
\[ b(\bv_2,q)=0\quad\forall\,q\in Q. \]
Hence, $\bv_2\in \bV_{\rm div}$.  By \eqref{5.51},
\[ \langle \bu^\prime(t),\bv_2\rangle_{\bVV}+a(\bu(t),\bv_2)+I_{\Gamma_S}(\psi^0(\gamma\bu(t);\gamma\bv_2))
\ge \langle \fb(t),\bv_2\rangle_{\bVV},\]
i.e.,
\[ \langle \bu^\prime(t),\bv-\bv_1\rangle_{\bVV}+a(\bu(t),\bv-\bv_1)
+I_{\Gamma_S}(\psi^0(\gamma\bu(t);\gamma(\bv-\bv_1)))\ge \langle \fb(t),\bv-\bv_1\rangle_{\bVV}.\]
Note that $\bv_1\in\bV_0$ and so $\gamma\bv_1=\bzero$ on $\Gamma_S$.  As a result,
\[ \langle \bu^\prime(t),\bv-\bv_1\rangle_{\bVV}+a(\bu(t),\bv-\bv_1)+I_{\Gamma_S}(\psi^0(\gamma\bu(t);\gamma\bv))
\ge \langle \fb(t),\bv-\bv_1\rangle_{\bVV}.\]
By \eqref{5.53},
\[ \langle \bu^\prime(t),\bv_1\rangle_{\bVV}+a(\bu(t),\bv_1)-b(\bv_1,p(t))= \langle \fb(t),\bv_1\rangle_{\bVV}.\]
By \eqref{5.54},
\[ b(\bv_1,p(t))=b(\bv,p(t)). \]
Thus,
\[ \langle \bu^\prime(t),\bv\rangle_{\bVV}+a(\bu(t),\bv)-b(\bv,p(t))+I_{\Gamma_S}(\psi^0(\gamma\bu(t);\gamma\bv))
\ge \langle \fb(t),\bv\rangle_{\bVV}.\]
Therefore, $(\bu,p)$ is a solution of Problem \ref{p1}.

We already know that the solution component $\bu$ is unique.  To show the uniqueness of $p$, assume
$(\bu,p_1)$ and $(\bu,p_2)$ are two solutions of Problem \ref{p1}. By \eqref{5.53}, for all $\bv\in \bV_0$,
\begin{align}
\langle \bu^\prime(t),\bv\rangle_{\bVV}+a(\bu(t),\bv)-b(\bv,p_1(t))= \langle \fb(t),\bv\rangle_{\bVV}, \label{5.55}\\
\langle \bu^\prime(t),\bv\rangle_{\bVV}+a(\bu(t),\bv)-b(\bv,p_2(t))= \langle \fb(t),\bv\rangle_{\bVV}. \label{5.56}
\end{align}
Subtract \eqref{5.56} from \eqref{5.55} to get
\[ b(\bv,p_1(t)-p_2(t))=0\quad \forall\,\bv\in \bV_0.\]
Applying the inf-sup property \eqref{inf-sup}, we have
\[ \alpha_b\|p_1(t)-p_2(t)\|_Q\le\sup_{\boldsymbol v\in {\boldsymbol V}_0}\frac{b(\bv,p_1(t)-p_2(t))}{\|\bv\|_{\boldsymbol V}}=0. \]
Hence, $p_1(t)-p_2(t)=0$ and the solution component $p$ is unique.

In conclusion, we have proved the following existence and uniqueness result.

\begin{theorem}\label{thm:main2}
Assume \eqref{assump1}, $H(\psi)$ and \eqref{small}. Then, Problem \ref{p1} has a unique solution 
$(\bu,p)\in \cW\times \cQ$.
\end{theorem}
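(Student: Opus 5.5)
The plan is to assemble the result from the pieces already in hand, in three stages: existence of the velocity, recovery of the pressure, and uniqueness of the pair.

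\emph{Velocity.} By Theorem \ref{thm:main}, the Rothe limit gives $(\bu,\bxi)\in\cW_{\rm div}\times\cS^*$ solving Problem \ref{p3.10div}. By the remark after Problem \ref{p3.10div}, $\bu$ then solves Problem \ref{p3.5div}. Using $\Psi^0(\gamma\bu;\gamma\bv)\le I_{\Gamma_S}(\psi^0(\gamma\bu;\gamma\bv))$, the same $\bu$ satisfies \eqref{5.51} for all $\bv\in\bV_{\rm div}$.

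One point needs care first: I must verify that $\bu\in\cW$, i.e.\ that $\bu'$ belongs to $\cV^*$ and not merely to $\cV_{\rm div}^*$. The bounds \eqref{S23c} and \eqref{S29} were stated in $\cV^*$, but $\bu_k'$ is a priori only defined on $\bV_{\rm div}$. Since $\bu_k'(t)=(\bu^k_n-\bu^k_{n-1})/k\in\bV_{\rm div}\subset\bH$, I would interpret it as an element of $\bV^*$ through the $\bH$ inner product. Alternatively, I would define $\bu'$ in $\cV^*$ by the identity $\bu'=\fb-\cA\bu-\gamma^*\bxi+B^*p$ once $p$ has been found.

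\emph{Pressure.} For $\bv\in\bV_{0,{\rm div}}$ we have $\gamma\bv=\bzero$, so $\psi^0(\cdot;\bzero)=0$. Applying \eqref{5.51} with $\pm\bv$ then gives the equality \eqref{5.52}. The functional $\ell(t)=\fb(t)-\bu'(t)-A\bu(t)$, restricted to $\bV_0$, vanishes on the kernel of $B$ in $\bV_0$. By the inf-sup condition \eqref{inf-sup} and the closed range theorem (the Babu\v{s}ka--Brezzi/de Rham argument), there is a unique $p(t)\in Q$ with $b(\bv,p(t))=-\langle\ell(t),\bv\rangle$ for all $\bv\in\bV_0$, together with the bound $\alpha_b\|p(t)\|_Q\le\|\ell(t)\|_{\bV^*}$.

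Measurability of $t\mapsto p(t)$ follows because $p(t)=(B_0^*)^{-1}$ applied to $\ell(t)$, where $B_0^*$ is an isomorphism from $Q$ onto the annihilator of the kernel in $\bV_0^*$. This is a bounded linear map, so the bound gives $p\in\cQ$. I expect this step, together with the $\cW$ issue above, to be the main technical point.

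\emph{Extension to all of $\bV$.} Given $\bv\in\bV$, the inf-sup property makes $B|_{\bV_0}$ onto $Q^*$, so I can pick $\bv_1\in\bV_0$ with $B\bv_1=B\bv$. Then $\bv_2=\bv-\bv_1\in\bV_{\rm div}$ and $\gamma\bv_2=\gamma\bv$. I would apply \eqref{5.51} to $\bv_2$ and the pressure equation to $\bv_1$, and add. Since $b(\bv_1,p)=b(\bv,p)$, this yields \eqref{p1_1}. The divergence condition \eqref{p1_2} holds because $\bu(t)\in\bV_{\rm div}$, and \eqref{p1_3} holds by construction, so $(\bu,p)$ solves Problem \ref{p1}.

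\emph{Uniqueness.} Suppose $(\bu_i,p_i)$, $i=1,2$, both solve Problem \ref{p1}. Then \eqref{p1_2} puts $\bu_i\in\cW_{\rm div}$. Testing \eqref{p1_1} with $\bv=\pm(\bu_2-\bu_1)\in\bV_{\rm div}$ kills the $b$ terms. Adding the two inequalities and using Lemma \ref{lem:psi} and $m>0$ gives \eqref{5.43b}, so Gronwall (direct integration) yields $\bu_1=\bu_2$.

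With $\bu$ fixed, the equation on $\bV_0$ (obtained with $\pm\bv$, $\gamma\bv=\bzero$) gives $b(\bv,p_1-p_2)=0$ for all $\bv\in\bV_0$. The inf-sup condition \eqref{inf-sup} then forces $p_1=p_2$ a.e.
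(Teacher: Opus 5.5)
Your architecture matches the paper's: velocity from Theorem~\ref{thm:main}, the equality \eqref{5.52} on $\bV_0\cap\bV_{\rm div}$, the pressure from the inf-sup condition on $\bV_0$, the splitting $\bv=\bv_1+\bv_2$ with $\bv_1\in\bV_0$, and uniqueness via the energy estimate and inf-sup. Your treatment of the measurability of $p$ is more careful than the paper's. The genuine gap is the point you flagged yourself, $\bu'\in\cV^*$, and neither of your remedies closes it.

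Reading $\bu_k'(t)=(\bu^k_n-\bu^k_{n-1})/k$ as an element of $\bV^*$ through the $\bH$ inner product is legitimate. However, it gives no bound in $\cV^*$ that is uniform in $k$. The only information on $\bu_k'$ is \eqref{S1}, i.e.\ \eqref{S20a}, and this holds in $\bV^*_{\rm div}$ only, because $\bu^k_n$ minimizes over $\bV_{\rm div}$. To test with non-solenoidal $\bv\in\bV$ you would have to add a discrete pressure, and you have no uniform $L^2(0,T;Q)$ control on it. The $\bV^*_{\rm div}$ bound does not transfer either: the $\bH$-orthogonal projection onto solenoidal fields does not map $\bV$ into $\bV_{\rm div}$, since it destroys the trace condition on $\Gamma_D$.

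Your second remedy is circular. You build $p(t)$ from $\ell(t)=\fb(t)-\bu'(t)-A\bu(t)$ acting on $\bV_0$, which already presupposes that $\bu'(t)$ is a functional on $\bV_0$. You also need $\ell\in L^2(0,T;\bV_0^*)$ to conclude $p\in\cQ$. Moreover, $\bu'$ is the distributional derivative of $\bu$, so its membership in $\cV^*$ has to be proved; it cannot be imposed by definition. The same property is used again when you split $\langle\bu'(t),\bv\rangle$ into its $\bv_1$ and $\bv_2$ parts.

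The paper does not supply this step either. It quotes $\bu'\in\cV^*$ from \eqref{S29}, but \eqref{S29} rests on \eqref{S23c}, and the proof of \eqref{S23c} in Lemma~\ref{lem:bd} reads \eqref{S20a} as an identity in $\bV^*$. So simply citing Lemma~\ref{lem:main} would not repair your argument.

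This is also not a removable technicality. The choice $\psi\equiv0$ satisfies $H(\psi)$. For the nonstationary Stokes system with initial velocity merely in $L^2$, one cannot in general expect $\bu'\in L^2(0,T;H^{-1}(\Omega)^d)$, or equivalently $p\in L^2(0,T;L^2(\Omega))$; the classical theory gives the pressure only as a distribution in time. To reach $(\bu,p)\in\cW\times\cQ$ you need either stronger, compatible data, such as those assumed in \cite{HYZ26}, or a weaker notion of pressure, e.g.\ its time primitive.

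Note also that \eqref{p1_3} does not hold ``by construction'' for every $\bu_0\in\bH$. Since $\cW\subset C([0,T];\bH)$ and $\bu(t)\in\bV_{\rm div}$ for a.e.\ $t$, the initial velocity must lie in the $\bH$-closure of $\bV_{\rm div}$. This requirement is tacit in your argument, and also in Problem~\ref{p3.10divk}, which needs $\bu_{k,0}\in\bV_{\rm div}$.
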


\section{Lipschitz continuous dependence on data}\label{sec:c}

In this section, we explore the Lipschitz continuous dependence of the solution $\bu$ on the source function
$\fb$ and the initial value $\bu_0$.

\begin{theorem}
Assume \eqref{assump1}, $H(\psi)$ and \eqref{small}, and let $(\bu,p)\in \cW\times \cQ$ be the solution of 
Problem \ref{p1}.  Then, the mapping $(\fb,\bu_0)\mapsto \bu$ is Lipschitz continuous from $\cV^*\times \bH$
to $\cV\cap C(0,T;\bH)$.
\end{theorem}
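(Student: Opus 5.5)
The plan is to run an energy estimate on the difference of two velocity solutions, following the uniqueness argument in the proof of Theorem \ref{thm:main}, while keeping the source and initial terms. Let $(\fb_i,\bu_{0,i})\in\cV^*\times\bH$, $i=1,2$, be two data sets. By Theorems \ref{thm:main} and \ref{thm:main2}, the corresponding velocities $\bu_i\in\cW_{\rm div}$ are unique, and each comes with some $\bxi_i\in\cS^*$ such that $(\bu_i,\bxi_i)$ solves Problem \ref{p3.10div} with data $(\fb_i,\bu_{0,i})$. Since the velocity component of Problem \ref{p1} coincides with that of Problem \ref{p3.10div} (both are the unique solution of Problem \ref{p3.5div}), it suffices to estimate $\bw:=\bu_1-\bu_2$ using the equation form \eqref{3.54}.

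First I test \eqref{3.54} for $\bu_1$ with $\bv=\bw(t)$ and the one for $\bu_2$ with $\bv=-\bw(t)$, then add. Since $\bw\in\cW_{\rm div}$, the integration-by-parts formula gives $\langle\bw',\bw\rangle=\frac12\frac{d}{dt}\|\bw\|_{\boldsymbol H}^2$ a.e. This yields
\[ \frac12\frac{d}{dt}\|\bw(t)\|_{\boldsymbol H}^2+a(\bw(t),\bw(t))+\langle\bxi_1(t)-\bxi_2(t),\gamma_{\rm div}\bw(t)\rangle_{{\boldsymbol S}^*\times{\boldsymbol S}}=\langle\fb_1(t)-\fb_2(t),\bw(t)\rangle_{\bVVdiv}. \]
The subdifferential term is bounded below by $-\alpha_\psi\lambda_\tau^{-1}\|\bw(t)\|_{\boldsymbol V}^2$. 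To see this, write $\langle\bxi_1,\gamma\bw\rangle\ge-\Psi^0(\gamma\bu_1;\gamma\bu_2-\gamma\bu_1)$ and similarly for $\bxi_2$, then apply \eqref{3.27a}. The right-hand side is bounded by $\|\fb_1(t)-\fb_2(t)\|_{\boldsymbol V^*}\|\bw(t)\|_{\boldsymbol V}\le\frac m2\|\bw(t)\|_{\boldsymbol V}^2+\frac1{2m}\|\fb_1(t)-\fb_2(t)\|_{\boldsymbol V^*}^2$ by \eqref{mCS}, with $m$ from \eqref{small}.

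Next I absorb the $\bV$-norm term and integrate from $0$ to $t$:
\[ \|\bw(t)\|_{\boldsymbol H}^2+m\int_0^t\|\bw(s)\|_{\boldsymbol V}^2ds\le\|\bu_{0,1}-\bu_{0,2}\|_{\boldsymbol H}^2+\frac1m\|\fb_1-\fb_2\|_{{\cal V}^*}^2 \quad\text{for all } t\in[0,T]. \]
The pointwise-in-$t$ statement is justified by $\cW\subset C([0,T];\bH)$. Taking the supremum over $t$ and setting $t=T$ in the integral term gives
\[ \|\bw\|_{\cal V}+\|\bw\|_{C([0,T];\boldsymbol H)}\le c\left(\|\fb_1-\fb_2\|_{{\cal V}^*}+\|\bu_{0,1}-\bu_{0,2}\|_{\boldsymbol H}\right), \]
which is the desired Lipschitz continuity. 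No Gronwall argument is needed because $m>0$.

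The only delicate point is making sure the test functions and pairings are legitimate. The function $\bw(t)$ lies in $\bV_{\rm div}$, so \eqref{3.54} applies. Also, $\fb_i\in\cV^*$ restricts to $\cV^*_{\rm div}$ with no larger norm, as in \eqref{4.3a}. The rest is the same computation as in Theorem \ref{thm:main}, so I expect no real obstacle beyond this bookkeeping.
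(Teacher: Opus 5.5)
Your proof is correct and is essentially the paper's energy argument: the same test functions $\pm(\bu_1-\bu_2)$, Young's inequality with $m/2$, and integration in time to reach \eqref{c5}. The only difference is that the paper works directly with \eqref{c1}--\eqref{c2} for two solutions of Problem \ref{p1}, drops the pressure via $b(\bu_1-\bu_2,p_1-p_2)=0$, and bounds the $\psi^0$ terms with Lemma \ref{lem:psi}, so it never needs the selections $\bxi_i$. That is slightly cleaner than your detour, whose parenthetical justification should rest on the construction in Section \ref{sec:p} plus uniqueness in Theorem \ref{thm:main2}, because $\Psi^0\le I_{\Gamma_S}(\psi^0)$ only shows that solutions of Problem \ref{p3.5div} solve Problem \ref{p3.5div1}, not the converse.
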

\begin{proof}
For $i=1,2$, let $(\fb_i,\bu_{i,0})\in \cV^*\times \bH$ and let $(\bu_i,p_i)\in\cW\times \cQ$ be the corresponding
solution of Problem \ref{p1}.  From the definition of the problem, for $i=1,2$ and for a.e.\ $t\in (0,T)$,
\begin{align}
& \langle \bu_i^\prime(t),\bv\rangle_{\bVV}+a(\bu_i(t),\bv)-b(\bv,p_i(t))+I_{\Gamma_S}(\psi^0(\bu_{i,\tau}(t);\bv_\tau))
\ge \langle \fb_i(t),\bv\rangle_{\bVV}\quad \forall\,\bv\in \bV, \label{c1}\\[0.5mm]
& b(\bu_i(t),q)=0\quad\forall\,q\in Q, \label{c2}
\end{align}
and 
\begin{equation}
\bu_i(0)=\bu_{i,0}.  \label{c3}
\end{equation}
Take $\bv=\bu_2(t)-\bu_1(t)$ in \eqref{c1} for $i=1$, take $\bv=\bu_1(t)-\bu_2(t)$ in \eqref{c1} for $i=2$,
and add the two inequalities to obtain, for a.e.\ $t\in (0,T)$,
\begin{align*}
&\langle \bu_1^\prime(t)-\bu_2^\prime(t),\bu_1(t)-\bu_2(t)\rangle_{\bVV}+a(\bu_1(t)-\bu_2(t),\bu_1(t)-\bu_2(t))\\
&\qquad \le I_{\Gamma_S}(\psi^0(\bu_{1,\tau}(t), \bu_{2,\tau}(t)-\bu_{1,\tau}(t))
+ \psi^0(\bu_{2,\tau}(t), \bu_{1,\tau}(t)-\bu_{2,\tau}(t)) )\\
&\qquad \quad {}+ b(\bu_1(t)-\bu_2(t),p_1(t)-p_2(t)) + \langle \fb_1(t)-\fb_2(t),\bu_1(t)-\bu_2(t)\rangle_{\bVV}.
\end{align*}
It follows from \eqref{c2} that $b(\bu_1(t)-\bu_2(t),p_1(t)-p_2(t))=0$.  Note that
\begin{align*}
& \langle \bu_1^\prime(t)-\bu_2^\prime(t),\bu_1(t)-\bu_2(t)\rangle_{\bVV}
   = \frac{1}{2}\,\frac{d}{dt}\|\bu_1(t)-\bu_2(t)\|_{\boldsymbol H}^2,\\[0.2mm]
& a(\bu_1(t)-\bu_2(t),\bu_1(t)-\bu_2(t)) = 2\mu\,\|\bu_1(t)-\bu_2(t)\|_{\boldsymbol V}^2,\\
& I_{\Gamma_S}(\psi^0(\bu_{1,\tau}(t), \bu_{2,\tau}(t)-\bu_{1,\tau}(t))
   + \psi^0(\bu_{2,\tau}(t), \bu_{1,\tau}(t)-\bu_{2,\tau}(t)) )
  \le \alpha_\psi \lambda_\tau^{-1} \|\bu_1(t)-\bu_2(t)\|_{\boldsymbol V}^2,\\
& \langle \fb_1(t)-\fb_2(t),\bu_1(t)-\bu_2(t)\rangle_{\bVV}
   \le \|\fb_1(t)-\fb_2(t)\|_{{\boldsymbol V}^*} \|\bu_1(t)-\bu_2(t)\|_{\boldsymbol V}.
\end{align*}
Hence, 
\[ \frac{1}{2}\,\frac{d}{dt}\|\bu_1(t)-\bu_2(t)\|_{\boldsymbol H}^2+m\,\|\bu_1(t)-\bu_2(t)\|_{\boldsymbol V}^2 
\le \|\fb_1(t)-\fb_2(t)\|_{{\boldsymbol V}^*} \|\bu_1(t)-\bu_2(t)\|_{\boldsymbol V}, \]
where $m$ is defined in \eqref{small}.  Applying the inequality
\[ \|\fb_1(t)-\fb_2(t)\|_{{\boldsymbol V}^*} \|\bu_1(t)-\bu_2(t)\|_{\boldsymbol V}\le
\frac{m}{2}\,\|\bu_1(t)-\bu_2(t)\|_{\boldsymbol V}^2+\frac{1}{2\,m}\,\|\fb_1(t)-\fb_2(t)\|_{{\boldsymbol V}^*}^2,\]
we further derive that for a.e.\ $t\in (0,T)$,
\[ \frac{d}{dt}\|\bu_1(t)-\bu_2(t)\|_{\boldsymbol H}^2+m\,\|\bu_1(t)-\bu_2(t)\|_{\boldsymbol V}^2 
\le \frac{1}{m}\,\|\fb_1(t)-\fb_2(t)\|_{{\boldsymbol V}^*}^2.\]
Integrating the inequality from $0$ to $t$, we get that for any $t\in [0,T]$,
\begin{equation}
\|\bu_1(t)-\bu_2(t)\|_{\boldsymbol H}^2+m\int_0^t\|\bu_1(s)-\bu_2(s)\|_{\boldsymbol V}^2 ds
\le \|\bu_{1,0}-\bu_{2,0}\|_{\boldsymbol H}^2+\frac{1}{m}\int_0^t\|\fb_1(s)-\fb_2(s)\|_{{\boldsymbol V}^*}^2 ds.
\label{c5}
\end{equation}
The stated Lipschitz continuous dependence follows from the inequality \eqref{c5}. \hfill
\end{proof}

\section{Concluding remark}\label{sec:conc}

To the best of our knowledge, the reference \cite{HYZ26} is the first paper that provides an existence and uniqueness result for
both the velocity and pressure fields of a nonstationary Stokes hemivariational inequality for an incompressible 
fluid flow subject to a nonsmooth non-monotone boundary condition of friction type.  The well-posedness analysis
in \cite{HYZ26} is carried out under the following assumptions on the data
\begin{equation}
\bu_0\in\bV,\quad {\rm div}\bu_0=0\ {\rm in}\ \Omega,\quad \fb\in H^1(0,T;\bV^*)
\label{conc1}
\end{equation}
together with a compatibility condition on initial values of the data.  This paper provides a different approach
for the well-posedness analysis of the nonstationary Stokes hemivariational inequality so that the assumptions
\eqref{conc1} are replaced by the more natural ones:
\[ \bu_0\in\bH,\quad \fb\in L^2(0,T;\bV^*); \]
moreover, the compatibility condition on initial values of the data is dropped.

We also comment that the analysis in \cite{HYZ26} on a fully discrete scheme for solving the 
nonstationary Stokes hemivariational inequality is still valid without the compatibility condition 
on initial values of the data and the condition ${\rm div}\bu_0=0$ in $\Omega$.


\begin{thebibliography}{99}

\bibitem{AC1984}
J. P. Aubin and A. Cellina, \emph{Differential Inclusions. Set-Valued Maps and Viability Theory}. Springer,
Berlin, 1984.


\bibitem{BCKYZ15}
K. Bartosz, X. Cheng, P. Kalita, Y. Yu and C. Zheng, Rothe method for parabolic
variational-hemivariational inequalities, \emph{Journal of Mathematical Analysis and Applications} {\bf 423} (2015), 841--862.

\bibitem{CL2021}
S. Carl and V. K. Le, \emph{Multi-Valued Variational Inequalities and Inclusions}, Springer, New York, 2021.

\bibitem{CG99}
C. Carstensen and J. Gwinner, A theory of discretisation for nonlinear evolution inequalities
applied to parabolic Signorini problems, \emph{Annali di Matematica Pura ed Applicata}  {\bf 177} (1999), 363--394.

\bibitem{Cl1983}
F. H. Clarke, \emph{Optimization and Nonsmooth Analysis}, Wiley, Interscience, New York, 1983.

\bibitem{CLSW1998}
F. H. Clarke, Y.S. Ledyaev, R. J. Stern, and P. R. Wolenski, \emph{Nonsmooth Analysis and Control Theory},
Springer, New York, 1998.

\bibitem{DMP20031}
Z. Denkowski, S. Mig\'orski, and N. S. Papageorgiou,
\emph{An Introduction to Nonlinear Analysis: Theory}, Kluwer Academic, New York, 2003.

\bibitem{Dj14}
J. K. Djoko, On the time approximation of the Stokes equations with nonlinear slip boundary
conditions, \emph{International Journal of Numerical Analysis and Modeling, Series B}  {\bf 11} (2014), 34--53.

\bibitem{FCHCD20}
C. Fang, K. Czuprynski, W. Han, X. L. Cheng, and X. Dai, Finite element method for a stationary Stokes
hemivariational inequality with slip boundary condition, \emph{IMA Journal of Numerical Analysis} {\bf 40} (2020), 2696--2716.

\bibitem{Fu02}
H. Fujita, A coherent analysis of Stokes flows under boundary conditions of friction type,
\emph{Journal of Computational and Applied Mathematics}  {\bf 149} (2002), 57--69.

\bibitem{GR1986}
V. Girault and P. A. Raviart, \emph{Finite Element Methods for Navier-Stokes Equations:
Theory and Algorithms}, Springer-Verlag, Berlin, 1986.

\bibitem{Han20}
W. Han, Minimization principles for elliptic hemivariational inequalities,
\emph{Nonlinear Analysis: Real World Applications} {\bf 54} (2020), paper no.\ 103114.

\bibitem{Han21}
W. Han, A revisit of elliptic variational-hemivariational inequalities,
\emph{Numerical Functional Analysis and Optimization} {\bf 42} (2021), 371--395.

\bibitem{Han2024}
W. Han, \emph{An Introduction to Theory and Applications of Stationary Variational-Hemivariational Inequalities},
Springer, New York, 2024.

\bibitem{Han26}
W. Han, Variational-hemivariational inequalities: a brief survey on mathematical theory and 
numerical analysis, to appear in \emph{Approximation Theory and Special Functions}.

\bibitem{HFWH25}
W. Han, F. Feng, F. Wang, and J. Huang, Numerical analysis of hemivariational inequalities with 
applications in contact mechanics, \emph{Advances in Applied Mechanics} {\bf 60} (2025), 113--178.

\bibitem{HJY23}
W. Han, F. Jing, and Y. Yao, Stabilized mixed finite element methods for a Navier–Stokes hemivariational inequality, 
\emph{BIT Numerical Mathematics} {\bf 63} (2023), article number 46.

\bibitem{HS19AN}
W. Han and M. Sofonea, Numerical analysis of hemivariational inequalities in contact mechanics,
\emph{Acta Numerica} {\bf 28} (2019), 175--286.

\bibitem{HYZ26}
W. Han, Y. Yao, and S. Zeng, Well-posedness and numerical analysis of a nonstationary Stokes
hemivariational inequality, to appear in \emph{Mathematics and Mechanics of Solids}.

\bibitem{HMP1999}
J. Haslinger, M. Miettinen, and P. D. Panagiotopoulos, \emph{Finite Element Method for
Hemivariational Inequalities: Theory, Methods and Applications},
Kluwer Academic Publishers, Dordrecht, Boston, London, 1999.

\bibitem{Jo2016}
V. John, \emph{Finite Element Methods for Incompressible Flow Problems}, Springer, 2016.

\bibitem{Ka13}
P. Kalita, Convergence of Rothe scheme for hemivariational inequalities of parabolic type,
\emph{International Journal of Numerical Analysis and Modeling}  {\bf 10} (2013), 445--465.

\bibitem{LL08}
Y. Li and K. Li, Penalty finite element method for Stokes problem with nonlinear slip
boundary conditions, \emph{Applied Mathematics and Computation}  {\bf 204} (2008), 216--226.

\bibitem{LHZ22}
M. Ling, W. Han, and S. Zeng, A pressure projection stabilized mixed finite element
method for a Stokes hemivariational inequality, \emph{Journal of Scientific Computing} {\bf 92} (2022), article number 13.

\bibitem{MO09}
S. Mig\'orski and A. Ochal, Quasistatic hemivariational inequality via vanishing 
acceleration approach, \emph{SIAM Journal on Mathematical Analysis} {\bf 41} (2009), 1415--1435.

\bibitem{MOS2013}
S. Mig\'orski, A. Ochal, and M. Sofonea, \emph{Nonlinear Inclusions and Hemivariational Inequalities.
Models and Analysis of Contact Problems}, Advances in Mechanics and Mathematics 26, Springer, New York, 2013.

\bibitem{NP1995}
Z. Naniewicz and P. D. Panagiotopoulos, \emph{Mathematical Theory of Hemivariational
Inequalities and Applications}, Dekker, New York, 1995.

\bibitem{NH1981} J. Ne\v cas and I. Hlava\v cek, \emph{Mathematical Theory of
Elastic and Elastoplastic Bodies: An Introduction}, Elsevier, Amsterdam, 1981.

\bibitem{Pa83}
P. D. Panagiotopoulos, Nonconvex energy functions, hemivariational inequalities
and substationary principles, \emph{Acta Mechanica} {\bf 42} (1983), 160--183.

\bibitem{Rou2013}
T. Roubicek, \emph{Nonlinear Partial Differential Equations with Applications}, second edition,
Birkh\"{a}user Verlag, Basel, Boston, Berlin, 2013.

\bibitem{SM2025}
M. Sofonea and S. Mig\'orski, \emph{Variational-Hemivariational Inequalities with Applications},
second edition, CRC Press, Boca Raton, FL, 2025.

\bibitem{Te1979}
R. Temam, \emph{Navier-Stokes Equations: Theory and Numerical Analysis}, North-Holland, Amsterdam, 1979.

\bibitem{ZeiIIA}
E. Zeidler, \emph{Nonlinear Functional Analysis and Applications. II\,A}, Springer, New York, 1990.

\bibitem{ZMN22}
S. Zeng, S. Mig\'orski, and V. T. Nguyen, A class of hyperbolic variational-hemivariational inequalities
without damping terms, \emph{Advances in Nonlinear Analysis} {\bf 11} (2022), 1287--1306.

\end{thebibliography}
\end{document}